\numberwithin{equation}{section}
\newtheorem{theorem}[equation]{Theorem}
\newtheorem{proposition}[equation]{Proposition}
\newtheorem{lemma}[equation]{Lemma}
\newtheorem{corollary}[equation]{Corollary}
\theoremstyle{definition}
\newtheorem{rmk}[equation]{Remark}
\newenvironment{remark}[1][]{\begin{rmk}[#1] \pushQED{\qed}}{\popQED \end{rmk}}
\newtheorem{eg}[equation]{Example}
\newtheorem{defn}[equation]{Definition}
\newenvironment{definition}[1][]{\begin{defn}[#1]\pushQED{\qed}}{\popQED \end{defn}}
\newenvironment{subeqns}[1][]{\addtocounter{equation}{-1}
\begin{subequations}

}{\end{subequations}}
\newcommand{\cA}{\mathcal{A}}
\newcommand{\cB}{\mathcal{B}}
\newcommand{\rD}{\mathrm{D}}
\newcommand{\bF}{\mathbf{F}}
\newcommand{\rH}{\mathrm{H}}
\newcommand{\cI}{\mathcal{I}}
\newcommand{\cJ}{\mathcal{J}}
\newcommand{\cL}{\mathcal{L}}
\newcommand{\cO}{\mathcal{O}}
\newcommand{\bP}{\mathbf{P}}
\newcommand{\bS}{\mathbf{S}}
\newcommand{\cS}{\mathcal{S}}
\newcommand{\fS}{\mathfrak{S}}
\newcommand{\bV}{\mathbf{V}}
\newcommand{\bX}{\mathbf{X}}
\newcommand{\bZ}{\mathbf{Z}}
\newcommand{\bd}{\mathbf{d}}
\newcommand{\be}{\mathbf{e}}
\newcommand{\bk}{\mathbf{k}}
\newcommand{\bv}{\mathbf{v}}
\newcommand{\bw}{\mathbf{w}}
\newcommand{\bx}{\mathbf{x}}
\newcommand{\by}{\mathbf{y}}
\renewcommand{\phi}{\varphi}
\renewcommand{\tilde}[1]{\widetilde{#1}}
\newcommand{\arxiv}[1]{\href{http://arxiv.org/abs/#1}{{\tt arXiv:#1}}}
\DeclareMathOperator{\Sym}{Sym}
\DeclareMathOperator{\Spec}{Spec}
\newcommand{\GL}{\mathbf{GL}}
\newcommand{\SL}{\mathbf{SL}}
\newcommand{\init}{\mathrm{init}}
\newcommand{\Sec}{\mathrm{Sec}}
\newcommand{\Sub}{\mathrm{Sub}}
\title[Ideals of bounded rank symmetric tensors]{Ideals of bounded rank symmetric tensors \\are generated in bounded degree}
\author{Steven V Sam}
\thanks{SS was partially supported by NSF DMS-1500069}
\address{Department of Mathematics,
University of Wisconsin, Madison, WI, USA}
\email{\href{mailto:svs@math.wisc.edu}{svs@math.wisc.edu}}
\urladdr{\url{http://math.wisc.edu/~svs/}}
\date{April 19, 2016}
\subjclass[2010]{%
13E05, 
14M99, 
15A69, 
16T15.
}
\begin{document}

\maketitle

\begin{abstract}
Over a field of characteristic zero, we prove that for each $r$, there exists a constant $C(r)$ so that the prime ideal of the $r$th secant variety of any Veronese embedding of any projective space is generated by polynomials of degree at most $C(r)$. The main idea is to consider the coordinate ring of all of the ambient spaces of the Veronese embeddings at once by endowing it with the structure of a Hopf ring, and to show that its ideals are finitely generated. We also prove a similar statement for partial flag varieties and, in fact, arbitrary projective schemes, and we also get multi-graded versions of these results.
\end{abstract}

\section{Introduction}

Given a vector space $V$ over a field of characteristic $0$, let $\rD^d V$ denote its $d$th divided power, i.e., the invariant subspace in $V^{\otimes d}$ under $\Sigma_d$ (symmetric group). The Veronese cone is the image of the map $V \to \rD^d V$ given by $v \mapsto v^{\otimes d}$. If $X \subseteq \bP(V)$ is a subvariety, the $r$th secant variety of the $d$th Veronese of $X$, denoted $\Sec_{d,r}(X)$, is the Zariski closure in $\rD^d V$ of the set of expressions $\sum_{i=1}^r v_i^{\otimes d}$ where $v_i$ is in the affine cone of $X$.

Secant varieties have long been a topic of interest in classical algebraic geometry. Despite this, very little is known about their algebraic structure. A fundamental invariant of an embedded variety is the maximum degree of a minimal generator of its defining ideal. Most known results about this invariant for secant varieties either restrict to small values of $r$ or are of an incremental nature which push the boundaries of existing techniques.

Heuristically, invariants of Veronese re-embeddings of projective varieties tend to exhibit strong uniformity properties. In line with this, the goal of this paper is to introduce new theoretical tools for the study of the ideals of their secant varieties, and as an application, to prove the following uniformity statement (this is a special case of Theorem~\ref{thm:secant-X-main}):

\begin{theorem} \label{thm:secant-X}
Let $X \subseteq \bP(V)$ be a subvariety. There is a function $C_X(r)$, depending on $r$ $($and $X)$, but independent of $d$, such that the ideal of $\Sec_{d,r}(X)$ is generated by polynomials of degree $\le C_X(r)$. 
\end{theorem}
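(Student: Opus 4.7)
The approach, signaled by the abstract, is to assemble the coordinate rings of all the ambient spaces $\rD^d V$ into one bigraded object $R$ carrying extra algebraic structure, and to extract the uniform degree bound from a Noetherianity theorem for that structure. Concretely, I would set $R = \bigoplus_{d \ge 0} \Sym^\bullet(\rD^d V^*)$, bigraded by the Veronese index $d$ and the polynomial degree $k$. In addition to the internal multiplication in each $\Sym^\bullet(\rD^d V^*)$, the divided-power multiplication $\rD^d V \otimes \rD^e V \to \rD^{d+e} V$ dualizes and extends to give $R$ extra ``Hopf'' operations connecting different $d$'s; I would phrase everything in the category of polynomial functors in $V$, so that the structure is $\GL(V)$-equivariant and independent of $\dim V$. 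The collection $I_r := \bigoplus_d I(\Sec_{d,r}(X))$ should then be a Hopf ideal, essentially because the map $(v_1, \ldots, v_r) \mapsto \sum_i v_i^{\otimes(d+e)}$ factors through $\sum_i v_i^{\otimes d} \otimes v_i^{\otimes e}$ under the comultiplication, and because the $X$-condition is a cone condition on the individual $v_i$ preserved by this decomposition.

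The central technical step is a Noetherianity theorem: every $\GL$-equivariant Hopf ideal of $R$ is finitely generated, by elements of bounded bidegree. Granting this, choose Hopf generators $f_1, \ldots, f_N$ of $I_r$ with $f_i$ of polynomial degree $k_i$, and set $C_X(r) := \max_i k_i$. The remaining task is to verify that the Hopf operations used to reconstruct $I(\Sec_{d,r}(X))$ from the $f_i$ do not raise the polynomial degree above $C_X(r)$: translating an element of $R_{d_i, k_i}$ into $R_{d, k_i}$ for $d > d_i$ should be accomplished by a Hopf operation preserving $k_i$, after which ordinary ideal multiplication in $R_d$ only produces additional generators of strictly higher degree. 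The Hopf ring structure should be designed so that this is automatic; passing from $X = \bP(V)$ to general $X$ is a standard reduction carried out by quotienting $R$ by a suitable Hopf ideal encoding the defining equations of $X$.

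I expect the Noetherianity step to be the principal obstacle. In each fixed Veronese level $d$, the ring $\Sym^\bullet(\rD^d V^*)$ is a polynomial ring in arbitrarily many variables once $V$ is allowed to grow, so even with full $\GL(V)$-symmetry there is no hope of ordinary Noetherianity; the Hopf operations, which allow one to ``propagate'' generators across Veronese levels, are essential extra structure. I would pursue the proof via a Gr\"obner-style degeneration to a monomial (toric) Hopf ring and then a well-quasi-order argument on the resulting combinatorial orbit structure, in the spirit of known proofs of Noetherianity for twisted commutative algebras and for $\mathrm{FI}$-modules. A secondary technical point is to ensure that the monomial degeneration is compatible with the Hopf operations, so that finite generation of the degenerated ideal lifts back to $I_r$ itself.
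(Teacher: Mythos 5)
Your high-level architecture matches the paper's: assemble all Veronese levels into a bigraded object carrying both the polynomial-ring multiplication and an extra multiplication coming from the internal degree $d$, prove a Noetherianity theorem for this Hopf ring, verify that the secant ideals are Hopf ideals (closed under both multiplications, using compatibility with the coproduct), and extract the degree bound from the fact that the internal multiplication preserves polynomial degree. Your identification of Noetherianity as the key obstacle, and your proposed attack via a monomial degeneration plus a well-quasi-order argument, are also in line with what the paper does.

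However, your framing via $\GL(V)$-equivariance and polynomial functors is the wrong framework for \emph{this} theorem, and that gap is not cosmetic. For a general subvariety $X \subseteq \bP(V)$, the ideal $I(\Sec_{d,r}(X))$ is \emph{not} $\GL(V)$-stable, so a Noetherianity theorem restricted to $\GL$-equivariant Hopf ideals cannot be applied, even after the reduction you describe (quotienting by a Hopf ideal encoding $I_X$; that quotient no longer carries a $\GL$-action). What the theorem requires, and what the paper actually proves (Corollary~\ref{cor:sa-noeth}, Proposition~\ref{prop:invt-noeth}), is Noetherianity for \emph{arbitrary} bihomogeneous ideals, with $V$ held \emph{fixed} (the source of infinitude here is $d\to\infty$, not $\dim V\to\infty$). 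Fixing $\dim V$ is also exactly what makes the well-quasi-order argument go through: the paper's Gr\"obner argument reduces to Higman's lemma over $\bZ_{\ge 0}^{\dim V}$ with Dickson's lemma supplying the base case. Your observation that ``even with full $\GL(V)$-symmetry there is no hope of ordinary Noetherianity'' conflates growing $V$ with growing $d$; for Theorem~\ref{thm:secant-X} only $d$ varies. (The $\GL$-equivariant/polynomial-functor point of view would be natural for Theorem~\ref{thm:secant-main}, but even there the paper sidesteps it, reducing to a fixed $\dim V$ via \cite{manivel-michalek} or subspace varieties before running the Hopf-ring machine.)

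A secondary technical gap: you set up the Hopf ring on $\Sym^\bullet(\rD^d V^*)$ directly, but running a Gr\"obner-type argument there is awkward because monomial orders and initial ideals behave badly inside symmetric powers. The paper instead proves Noetherianity first for the non-symmetrized shuffle algebra $\cS(B) = \bigoplus_{n,d} B_d^{\otimes n}$ (where monomials and Higman's lemma are clean), and then transfers it to the symmetric-power version $\cS(B)^\Sigma$ via the symmetrization projection $\pi$ and the compatibility Lemma~\ref{lem:pi-prop}. Some device of this kind will be needed if you want to push your degeneration argument through.
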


An important special case is $X=\bP(V)$, i.e., secant varieties of Veronese embeddings of projective space. In this case, one can improve Theorem~\ref{thm:secant-X}:

\begin{theorem} \label{thm:secant-main}
There is a function $C(r)$, depending on $r$, but independent of $d$ and $\dim V$, such that the ideal of $\Sec_{d,r}(\bP(V))$ is generated by polynomials of degree $\le C(r)$.
\end{theorem}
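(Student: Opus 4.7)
The plan is to bundle together the coordinate rings of the Veronese ambient spaces $\rD^d V$ for all $d \ge 0$ simultaneously, and derive the uniform bound from a finiteness theorem for an algebraic structure on this bundle. Working with $V$ of countably infinite dimension so that all finite-dimensional cases arise by specialization, let $\cR = \bigoplus_{d \ge 0} \cR_d$ with $\cR_d = \Sym(\Sym^d V^*)$, the coordinate ring of $\rD^d V$. One equips $\cR$ with a Hopf ring structure in the category of polynomial $\GL(V)$-representations: the usual commutative multiplication inside each $\cR_d$, a second graded product $\cR_d \otimes \cR_e \to \cR_{d+e}$ lifting the symmetric algebra multiplication $\Sym^d V^* \otimes \Sym^e V^* \to \Sym^{d+e} V^*$, and a comultiplication dual to the natural maps $\rD^{d+e} V \to \rD^d V \otimes \rD^e V$.

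The key geometric observation is that $\cI_r = \bigoplus_d I(\Sec_{d,r}(\bP(V)))$ forms a $\GL$-equivariant Hopf ideal of $\cR$. Stability under the internal multiplication is standard. Stability under the second product and compatibility with the coproduct encode the fact that a polynomial cutting out an $r$-secant condition in degree $d$ propagates to a corresponding condition in degree $d+e$, because $v^{\otimes (d+e)}$ is the external product of $v^{\otimes d}$ and $v^{\otimes e}$ while the coproduct cleanly splits sums of pure powers into their components. Verifying this closure is a concrete calculation once the Hopf ring structure is in place.

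The technical heart of the argument is then a noetherianity statement: every $\GL$-equivariant Hopf ideal of $\cR$ is finitely generated (in suitable bi-degrees). Applied to $\cI_r$, this yields a finite set of generators, all of polynomial degree at most some $C(r)$. For any fixed finite-dimensional $V$ and fixed $d$, the graded piece $\cI_r \cap \cR_d$, which is exactly the ideal of $\Sec_{d,r}(\bP(V))$, is then generated in $\cR_d$ by specializations and Hopf-ring translates of these finitely many generators; because the second product lifts the linear map $\Sym^d V^* \otimes \Sym^e V^* \to \Sym^{d+e} V^*$ it does not inflate polynomial degree beyond $C(r)$. The bound is then independent of $d$ and $\dim V$ by construction.

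The main obstacle is proving noetherianity of $\cR$ as an equivariant Hopf ring. Ordinary commutative noetherianity is unavailable since each $\cR_d$ has infinitely many variables, and the structure is more intricate than that of a twisted commutative algebra. The proof will need to combine techniques for noetherianity of equivariant symmetric algebras with careful bookkeeping of the second product and the coproduct. A natural route is a filtration argument, say by the coradical of the Hopf structure or by an analogous layer-by-layer decomposition, reducing the statement to finiteness theorems already available in the category of polynomial representations.
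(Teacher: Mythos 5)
Your high-level architecture matches the paper's: assemble $\bigoplus_{d}\Sym(\Sym^d V^*)$ into a Hopf ring with a second (``internal'') product lifting $\Sym^d V^*\otimes\Sym^e V^*\to\Sym^{d+e}V^*$, show that the direct sum of the secant ideals is closed under both products and compatible with the coproduct, and then invoke a noetherianity theorem for this structure. That skeleton is correct and is exactly the paper's.

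The gap is in the step you yourself flag as the ``main obstacle,'' and it is not a minor technical loose end but the place where your proposed route diverges from what is actually feasible. You propose to keep $V$ countably infinite-dimensional and prove a noetherianity theorem for $\GL(V)$-equivariant Hopf ideals of $\cR=\bigoplus_d\Sym(\Sym^d V^*)$. There is no such theorem available, and it is almost certainly out of reach at present: even the noetherianity of the \emph{single} twisted commutative algebra $\Sym(\Sym^d\bk^\infty)$ is an open problem for $d\ge 3$ (only the degree-$\le 2$ cases are known, cf.\ \cite{deg2tca}). The ``coradical filtration'' or ``layer-by-layer'' reductions you gesture at would need, as input, noetherianity of those individual TCAs, so the reduction does not actually reduce. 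The paper avoids this wall by \emph{first} bounding $\dim V$ in terms of $r$ alone (via \cite[Proposition 5.7]{manivel-michalek}, or equivalently by the determinantal generation in degree $r+1$ of the subspace variety ideal), and only \emph{then} building the Hopf ring. With $V$ finite-dimensional, $\cS(B)$ is a shuffle commutative algebra on a finite alphabet, and noetherianity of its ideals follows from Dickson's lemma and Higman's lemma via a concrete monomial order and initial-ideal argument; the passage to $\Sigma_n$-invariants then costs only a factor $\binom{n+m}{n}^{-1}$, which is harmless in characteristic $0$. So the missing ingredient in your proposal is precisely this dimension reduction: without it, the noetherianity statement you need is unproven and likely false-or-very-hard, and with it, the equivariant machinery you invoke becomes unnecessary.

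One smaller point: you say ``stability under the internal multiplication is standard'' and ``verifying this closure is a concrete calculation once the Hopf ring structure is in place.'' The nontrivial part is not that each $I(\Sec_{d,r})$ is an ideal for the ordinary product, but that the second product $*$ distributes over the first in the coalgebraic sense $\Delta(\bx*\bv)=\Delta(\bx)*\Delta(\bv)$; this is the content of the paper's Lemma~\ref{lem:hopf-ring}, and it is a genuine combinatorial identity (double-counting shuffles and symmetrizations), not a formality. You should budget a real proof for it rather than absorb it into ``once the Hopf ring structure is in place.''
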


The problem of finding generators for the ideal of $\Sec_{d,r}(\bP(V))$ is of general interest in the study of classical algebraic geometry and tensors since its points are the symmetric tensors with border rank $\le r$. This is closely related to the polynomial version of Waring's problem of expressing polynomials as sums of powers of linear forms. For this connection, see \cite{iarrobino}. 

Despite being the subject of intense research, very little is known about these equations: there are many partial results and only few special cases have complete answers. We refer the reader to \cite{landsberg-ottaviani} and its references for a survey of known results, and to \cite{landsberg} for an introduction to tensors in algebraic geometry. For example, the basic equations are the determinantal equations coming from catalecticant matrices, but these often do not cut out the secant variety, even set-theoretically \cite[Theorem 1.2]{BB} (see also \cite{BGL} for related discussions). These determinantal equations fit into the framework of ``vector bundle methods'' (see \cite[\S 5]{landsberg-ottaviani}), which essentially account for all known constructions of equations. These methods give an infinite hierarchy of types of equations, but still do not suffice in general \cite{galazka}.

\smallskip

The main idea of this paper is to combine all of the Veronese embeddings of $X$ into a single algebraic structure known as a Hopf ring (sometimes called coalgebraic ring to avoid confusion with ``Hopf algebra'') and to prove and exploit noetherianity properties of this structure. Hopf rings have two multiplications and a comultiplication satisfying some compatibilities. They are ubiquitous in algebraic topology, especially in the context of generalized homology of $\Omega$-spectra (see \cite{wilson} for an exposition). Hopf rings also naturally appear in the cohomology of families of finite groups \cite{GSS}. A combinatorial example is the ring of symmetric functions: the additional multiplication is inner plethysm; for its compatibility with multiplication and comultiplication, see \cite[\S I.7, Examples 22, 23]{macdonald}. As far as we are aware, Hopf rings have not appeared in the context of secant varieties.

Following Theorem~\ref{thm:secant-X}, it is natural to ask about the behavior of the function $r \mapsto C_X(r)$. Is it bounded by a polynomial? an exponential? We do not know, even in the case when $X = \bP(V)$. The advantage of our approach is that these questions can be rephrased in terms of the corresponding Hopf ring, which paves the way for a new line of investigation.

\subsection{Complements}

One generalization of Theorem~\ref{thm:secant-main} is to consider families of varieties that depend functorially on a vector space $V$. Our techniques allow us to handle the family of partial flag varieties. To be precise, let $\be = (e_1 < e_2 < \cdots < e_\ell)$ be an increasing sequence of positive integers. Given a vector space $V$, let $\bF(\be, V)$ be the partial flag variety of type $\be$: its points are increasing sequences of subspaces $W_1 \subset \cdots \subset W_\ell \subset V$ where $\dim W_i = e_i$. Its Picard group is isomorphic to $\bZ^\ell$, generated by  line bundles $\cL_1, \dots, \cL_\ell$ (corresponding to the fundamental weights $\omega_{e_1}, \dots, \omega_{e_\ell}$  of $\SL(V)$) with the property that $\cL(\bd) := \cL_1^{\otimes d_1} \otimes \cdots \otimes \cL_\ell^{\otimes d_\ell}$ is very ample if and only if $d_1, \dots, d_\ell > 0$ (and defines a map to projective space if and only if $d_1, \dots, d_\ell \ge 0$). When $\be = (1)$, $\bF(\be,V)$ is projective space.

Define $\Sec_{\bd,r}(\bF(\be,V))$ to be the $r$th secant variety of the image of $\bF(\be,V)$ under the map defined by the line bundle $\cL(\bd)$.

\begin{theorem} \label{thm:secant-flag}
There is a function $C_\be(r)$, depending on $r$ $($and $\be)$, but independent of $\bd$ and $\dim V$, such that the prime ideal of $\Sec_{\bd,r}(\bF(\be,V))$ is generated by polynomials of degree $\le C_\be(r)$.
\end{theorem}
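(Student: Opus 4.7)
The plan is to extend the Hopf-ring strategy used for Theorem~\ref{thm:secant-main} to the multi-graded setting of partial flags. For fixed $\be = (e_1 < \cdots < e_\ell)$, consider the section ring
\[
R_\be(V) \;=\; \bigoplus_{\bd \in \bZ^\ell_{\ge 0}} H^0\!\bigl(\bF(\be, V),\, \cL(\bd)\bigr),
\]
multi-graded by $\bZ^\ell_{\ge 0}$. By Borel--Weil, the piece in multi-degree $\bd$ is the dual of the irreducible $\SL(V)$-representation of highest weight $\sum_i d_i \omega_{e_i}$. Taking the colimit of the $R_\be(V)$ along coordinate inclusions $V \injects V \oplus k$ yields a polynomial-functor object $R_\be$ carrying a $\GL_\infty$-action. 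The aim is to endow $R_\be$ with a second multiplication (pulled back from the diagonal $\bF(\be, V) \injects \bF(\be, V) \times \bF(\be, V)$, which restricts to a divided-power-type product in each fixed multi-degree) and a comultiplication (induced by $V \oplus W$-functoriality), so that $R_\be$ becomes a multi-graded Hopf ring in exactly the sense used for $\be = (1)$.

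The central step is a noetherianity statement: every $\GL_\infty$-stable Hopf ideal of $R_\be$ is finitely generated as a Hopf ideal. Granting this, the loci $\Sec_{\bd, r}(\bF(\be, V))$ assemble into a single $\GL_\infty$-stable Hopf ideal $I_r \subseteq R_\be$. The required compatibilities are standard: $\SL(V)$-invariance of border rank gives the group action; the relation $\Sec_{\bd, r} \cdot \Sec_{\bd', r} \subseteq \Sec_{\bd+\bd', r}$ (multiplying sections in disjoint multi-degrees preserves the rank decomposition) gives closure under the first multiplication; the diagonal restricts the divided-power product to the secant locus; and $V \oplus W$-decompositions give closure under the comultiplication. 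Hopf-ideal noetherianity then yields finitely many Hopf generators for $I_r$, each of bounded total multi-degree. Since every defining equation of $\Sec_{\bd, r}(\bF(\be, V))$ inside $R_\be(V)_\bd$ is obtained from these generators by operations that do not increase the degree of an equation in its ambient multi-graded piece, this produces the desired uniform bound $C_\be(r)$ independent of $\bd$ and $\dim V$.

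The main obstacle will be proving the multi-graded Hopf-ring noetherianity for $R_\be$ when $\ell > 1$. The single-graded divided-power Hopf rings used in Theorem~\ref{thm:secant-main} are rigid twisted commutative algebras with a well-understood combinatorial presentation, whereas the Pl\"ucker relations coupling the factors $\cL_1, \ldots, \cL_\ell$ make $R_\be$ substantially more intricate. The natural workaround is to realize $R_\be$ as a $\GL_\infty$-equivariant sub-Hopf-ring of the $\ell$-fold tensor product $R_{(e_1)} \otimes \cdots \otimes R_{(e_\ell)}$ of single-Grassmannian Hopf rings via the Segre-style embedding $\bF(\be, V) \injects \prod_i \bG(e_i, V)$, and then to deduce multi-graded noetherianity from the single-graded case along the lines of Theorem~\ref{thm:secant-main}, using a Hilbert-basis-style argument across the extra grading directions together with the fact that sub-Hopf-rings of noetherian Hopf rings remain noetherian under our $\GL_\infty$-stable hypothesis. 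Once this noetherianity is in hand, the extraction of Theorem~\ref{thm:secant-flag} proceeds in direct parallel with the argument for Theorem~\ref{thm:secant-main}.
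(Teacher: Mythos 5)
Your plan diverges from the paper's in a way that leaves a real gap, and it misses the decisive simplification.

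The obstacle you flag yourself---proving a multi-graded Hopf-ring noetherianity for $R_\be$ when $\ell>1$---is indeed where the proposal breaks down, and the workaround you offer does not hold. The assertion that ``sub-Hopf-rings of noetherian Hopf rings remain noetherian under our $\GL_\infty$-stable hypothesis'' is false in the generality you need: subalgebras of noetherian algebras are not noetherian in general (already $\bk[x, xy, xy^2,\ldots]\subset\bk[x,y]$), and neither the Hopf-ring structure nor $\GL_\infty$-stability automatically rescues this. Nothing in the paper's §\ref{sec:shuffle}--§\ref{sec:coalgebraic} supports such a descent of noetherianity to subobjects, so the proposed realization of $R_\be$ inside $R_{(e_1)}\otimes\cdots\otimes R_{(e_\ell)}$ does not by itself produce finite generation of its ideals. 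You would need to actually prove a new noetherianity theorem, which your sketch does not do.

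The paper avoids this issue entirely, and this is the key idea you are missing. After fixing $\dim V = (r+2)e_\ell$ via \cite[Proposition 5.7]{manivel-michalek} (note: the paper reduces to a \emph{fixed} finite-dimensional $V$ first, rather than passing to a $\GL_\infty$-colimit), it sets
\[
B \;=\; \bigoplus_{d_1,\dots,d_\ell\ge 0} \rH^0\bigl(\bF(\be,V);\,\cL(\bd)\bigr),
\]
but then regards $B$ as a \emph{singly} graded algebra via the total degree $d_1+\cdots+d_\ell$. With this grading, $B$ is generated in degree one with $\dim_\bk B_1<\infty$, so Corollary~\ref{cor:B-fg-ideal} and Proposition~\ref{prop:invt-noeth} apply verbatim: every ideal of $\cS(B)^\Sigma$ is finitely generated. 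No multi-graded noetherianity, no Plücker-relation analysis, and no tensor-product embedding is required. The individual Veronese embeddings for the various $\bd$ are then handled by observing that $\cS(B(\bd))$ (for $B(\bd)=\bigoplus_{d\ge0}\rH^0(\cL(\bd)^{\otimes d})$) is a subring of $\cS(B)$ with $\cI_B(r)\cap\cS(B(\bd))_\Sigma = \cI_{B(\bd)}(r)$, and generators $f_1,\dots,f_N$ of $\fS(\cI_B(r))$ yield generators of degree $\le\max_i\deg f_i$ for each fixed $\bd$ by inspecting which products $g*f_i$ land in the subring. Your sketch gestures at an ``assemble into a single Hopf ideal'' step but does not supply this intersection/restriction mechanism, which is what actually converts finite generation in the big object into a degree bound uniform in $\bd$. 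A smaller point: the comultiplication used for secant/join ideals is the standard coproduct on the (divided power) symmetric algebra dual to addition on the ambient affine space, not the one induced by $V\oplus W$-functoriality; conflating the two would derail the join computation in §\ref{sec:joins}.
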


\begin{remark}
The same proof applies in the more general setting of a projective scheme $X$ embedded in a product of projective spaces. There we work with the multi-homogeneous coordinate ring of $X$ and consider its multi-graded Veronese subrings which is the sum of components whose multi-degree is a multiple of a fixed multi-degree $\bd$. Then we get a boundedness statement as above which is independent of the choice of $\bd$.

The same techniques mentioned above allow us to get multi-graded generalizations of the main results, i.e., when considering products of projective varieties, see \S\ref{sec:products}.
\end{remark}

\begin{remark}
Going back to Theorem~\ref{thm:secant-X}, it is well-known that $C_X(r) \ge r+1$ \cite[Proposition 7.5.1.2]{landsberg} so one cannot remove the dependence on $r$. 
\end{remark}

\begin{remark}
The language of shuffle commutative algebras and Hopf rings used in this paper can be translated into the language of representations of categories, as in \cite{catgb}. The latter approach clarifies how to define projective generators for the category of modules (they are not just direct sums of the algebra) and will be investigated in later work in connection to higher syzygies of secant varieties. We have chosen to avoid this language here to keep the layers of abstraction to a minimum.
\end{remark}

\subsection{Outline of argument}

Even though Theorem~\ref{thm:secant-main} is a special case of the other results, we have chosen to present its proof carefully and then to explain the relevant changes for the other results because all of the key ideas are present in Theorem~\ref{thm:secant-main} and because we wanted to keep the notation as simple as possible. The proof of Theorem~\ref{thm:secant-main} breaks into the following steps:

\begin{enumerate}[\rm 1.]
\item For fixed $r$, we reduce to considering vector spaces of dimension $r$ (the main point is to reduce to a fixed dimension). For this, we can either use \cite{manivel-michalek} or the fact that the prime ideal of the associated ``subspace variety'' is generated in degrees that are independent of $d$ and $\dim V$ (in fact, it is generated by polynomials of degree $r+1$). This is explained in \S\ref{sec:proof}, and the remainder of the steps assume that we have made this reduction.

\item We consider all values of $d$ via the space $\cA_\Sigma = \bigoplus_{n,d} \Sym^n(\Sym^d V^*)$. The key observation is that this space has two products: the usual one which is ``external'' in that it multiplies the outside symmetric powers, and one which is ``internal'' in that it multiplies the inside symmetric powers. We show that when $\dim V < \infty$, subspaces of this space which are ideals for both products are finitely generated. The internal product involves symmetrizations and behaves poorly in positive characteristic, so here we assume that the field has characteristic $0$. This is done in \S\ref{sec:coalgebraic}; preparations for it are in \S\ref{sec:shuffle}.

\item Finally, the two products are compatible with the standard comultiplication on the symmetric algebra. Secant varieties are geometrically defined in terms of adding points, but we can also define them algebraically in terms of comultiplication. This allows us to prove the crucial fact that the ideals of the secant varieties of the Veronese cone are ideals in $\cA_\Sigma$ with respect to both products. So using the above, we can generate them using finitely many polynomials. This is explained in \S\ref{sec:joins}.
\end{enumerate}

The proof of Theorem~\ref{thm:secant-X} follows as in steps 2 and 3 above with $\Sym^d V^*$ replaced by $\rH^0(X; \cO(d))$ (and step 1 is not relevant), and is presented in \S\ref{sec:proof}; the proof of Theorem~\ref{thm:secant-flag} follows the same steps.

\subsection{Relation to previous work}

\begin{itemize}[$\bullet$]
\item Rather than look at all Veronese embeddings of projective space, one can consider all Segre embeddings of products of projective spaces, or one can consider all Pl\"ucker embeddings of Grassmannians. Are the ideals of their secant varieties defined in bounded degree? While this is unknown, if one only asks about generators of these ideals up to taking radical (i.e., finding equations which only cut out the secant variety as a set), then this is known by work of Draisma--Kuttler \cite{draismakuttler} and Draisma--Eggermont \cite{draismaeggermont}. Curiously, the techniques in those works do not apply to Veronese embeddings, even to get set-theoretic statements (as Draisma informed the author in private communication).

\item $\Sec_{d,1}(V)$ is the Veronese cone, and it is well known that its ideal is generated by quadratic polynomials, so $C(1) = 2$. Kanev shows in \cite{kanev} that $C(2) = 3$, that is, the ideal of the secant line variety of the Veronese cone is generated by cubic polynomials (see \cite{oeding-raicu, raicu} for related results).

\item Even ignoring the issue of considering Veronese re-embeddings, the problem of computing the ideals of secant varieties is a difficult one. We refer the reader to \cite{CGG, LW1, manivel-michalek, sidman} and the references therein. For some motivations for this problem coming from algebraic statistics, see \cite{GSS1}, and from geometric complexity theory, see \cite{landsberg-br, landsberg-matrix}.

\item The idea for showing that ideals in $\cA_\Sigma$ are finitely generated is motivated by the author's previous work on twisted commutative algebras (see \cite{symc1, expos, catgb, deg2tca}), which in turn was motivated by, for example, \cite{fimodules, cohen, DK, draismakuttler, draismaeggermont, higman, hillarsullivant, delta-mod}. These structures formalize the idea of working independently of the dimension of the underlying projective spaces, which is closely related to the notion of ``inheritance'' in \cite{landsberg-manivel}.

\item As we mentioned, properties of the coordinate ring of an embedded variety tend to improve as one takes sufficiently high Veronese subrings. For example, sufficiently high Veronese subrings are presented by quadrics and are Koszul (see \cite{backelin} and also \cite{ERT}). For an example with singularities of the secant variety and Veronese re-embeddings, see \cite{ullery}. 

\end{itemize}

\subsection{Conventions}

For the majority of the paper, $\bk$ denotes a field of characteristic $0$. In \S\ref{sec:shuffle}, we do not need this assumption, and we will let it be any commutative noetherian ring. All tensor products are taken over $\bk$.

The symmetric group on $n$ letters is $\Sigma_n$ and the set $\{1, \dots, n\}$ is denoted by $[n]$.

Given a vector space $V$, $\Sym^n V$ denotes its $n$th symmetric power, i.e., the coinvariants of $V^{\otimes n}$ under the usual permutation action of $\Sigma_n$, and $\Sym V = \bigoplus_{n \ge 0} \Sym^n V$. 

\subsection*{Acknowledgements}

I thank Jaros{\l}aw Buczy\'nski, Daniel Erman, Maciej Ga{\l}{\c a}zka, Mateusz Micha{\l}ek, Luke Oeding, Thanh Vu, and an anonymous referee for helpful comments.

\section{Shuffle commutative algebra} \label{sec:shuffle}

Fix a commutative noetherian ring $\bk$ (it will suffice to take $\bk$ to be a field, but the general case is no harder and may be useful for future applications).

\subsection{Shuffling polynomials} 
Fix a positive integer $r$. Set
\[
\cA = \bigoplus_{n, d \ge 0} (\Sym^d \bk^r)^{\otimes n}, \qquad \cA_{d,n} = (\Sym^d \bk^r)^{\otimes n}.
\]
We will only consider subspaces of $\cA$ which are homogeneous with respect to the bigrading $(d,n)$. A {\bf monomial} of $\cA$ is an element of the form $w_1 \otimes \cdots \otimes w_n$ where $w_i \in \Sym^d \bk^r$ and $w_i = \alpha x_{j_1} \cdots x_{j_d}$ where $\alpha \in \bk$ and $\{x_1, \dots, x_r\}$ is the standard basis of $\bk^r$. Pick a subset $\{i_1 < \cdots < i_n\}$ of $[n+m]$ and let $\{j_1 < \cdots < j_m\}$ be its complement; denote this pair of subsets by $\sigma$ and call it a {\bf split} of $[n+m]$. A split defines a shuffle product
\[
\cdot_\sigma \colon (\Sym^d \bk^r)^{\otimes n} \otimes (\Sym^d \bk^r)^{\otimes m} \to (\Sym^d \bk^r)^{\otimes (n+m)}
\]
where $(u_1 \otimes \cdots \otimes u_n) \cdot_\sigma (v_1 \otimes \cdots \otimes v_m) = w_1 \otimes \cdots \otimes w_{n+m}$ where $w_{i_k} = u_k$ and $w_{j_k} = v_k$. Whenever we write $f \cdot_\sigma g$, we are implicitly assuming that $\sigma$ is a split of the correct format (otherwise define it to be $0$). This gives $\cA$ the structure of a {\it shuffle commutative algebra}: we can think of the operations $\cdot_\sigma$ as ``shuffling'' the tensor factors together while preserving their relative orders. 

Commutativity means that we can always write $f \cdot_\sigma g = g \cdot_\tau f$ where $\tau$ is the split that swaps the two subsets in the split of $\sigma$. Associativity means that $f \cdot_\sigma (g \cdot_\tau h)$ can be rewritten as $(f \cdot_\alpha g) \cdot_\beta h$: taking the two products $\cdot_\sigma$ and $\cdot_\tau$ amounts to a nested splitting $S = S_1 \amalg (S_2 \amalg S_3)$, and to get $\cdot_\alpha$ and $\cdot_\beta$ we instead use the nested splitting $S = (S_1 \amalg S_2) \amalg S_3$.

In fact, $\cA$ has another multiplication: we use the obvious product
\[
* \colon (\Sym^d \bk^r)^{\otimes n} \otimes (\Sym^e \bk^r)^{\otimes n} \to (\Sym^{d+e} \bk^r)^{\otimes n}
\]
and extend it to all of $\cA$ by declaring that all other products are $0$. Then $*$ is both commutative and associative. The two products commute in a certain sense:

\begin{lemma} \label{lem:rewrite}
Given $f \in \cA_{d,n}$, $b \in \cA_{d,m}$, and $a \in \cA_{e,n+m}$ and a split  $\sigma$ of $[n+m]$, there exist $g_1,\dots,g_r \in \cA_{e,n}$ and $h_1,\dots,h_r \in \cA_{d+e, m}$ so that
\[
a * (b \cdot_\sigma f) = \sum_{i=1}^r h_i \cdot_\sigma (g_i * f).
\]
\end{lemma}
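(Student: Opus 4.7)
The plan is to reduce by multilinearity of both $*$ and $\cdot_\sigma$ to the case where $a \in \cA_{e,n+m}$ is a pure tensor $a_1 \otimes \cdots \otimes a_{n+m}$. Since $\cA_{e,n+m}$ is spanned by such pure tensors and each will produce a single pair $(g, h)$, the pure-tensor expansion of a general $a$ immediately yields the finite sum stated in the lemma.

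For a fixed split $\sigma$ given by $I = \{i_1 < \cdots < i_m\}$ (the positions receiving $b$) and $J = \{j_1 < \cdots < j_n\}$ (those receiving $f$), the crucial observation is that a pure tensor $a$ is itself a shuffle product for this split: setting $p := a_{i_1} \otimes \cdots \otimes a_{i_m} \in \cA_{e,m}$ and $q := a_{j_1} \otimes \cdots \otimes a_{j_n} \in \cA_{e,n}$, one has $a = p \cdot_\sigma q$ directly from the definition of $\cdot_\sigma$.

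With both $a$ and $b \cdot_\sigma f$ thus arranged under the same split, the core step is the pure-tensor identity
\[
(p \cdot_\sigma q) * (b \cdot_\sigma f) = (p * b) \cdot_\sigma (q * f),
\]
which I would verify slot by slot: at position $i_k$ both sides reduce to $a_{i_k} \cdot b_k$ in $\Sym^{d+e} \bk^r$, and at position $j_k$ both reduce to $a_{j_k} \cdot f_k$. Taking $h := p * b \in \cA_{d+e,m}$ and $g := q \in \cA_{e,n}$ produces the required form, and summing over the pure-tensor summands of the original $a$ completes the argument.

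I do not expect a real obstacle; the content of the lemma is just that the slotwise ``internal'' multiplication $*$ commutes past the position-preserving ``external'' shuffle $\cdot_\sigma$ once one uses the common split to align factors. The only thing that needs checking is the bookkeeping of tensor lengths and internal degrees, but $p$ automatically has $m$ factors (matching $b$) and $q$ has $n$ factors (matching $f$), so $p * b$ and $q * f$ land in the correct graded pieces. No characteristic hypothesis is required for this step.
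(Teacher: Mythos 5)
Your proof is correct and takes essentially the same route as the paper: reduce to the case of a pure tensor $a$ by multilinearity, observe that $a = p \cdot_\sigma q$ for the very split $\sigma$ at hand, and take $g = q$ and $h = p * b$; the paper writes the same choices as $g = \alpha_{j_1} \otimes \cdots \otimes \alpha_{j_n}$ and $h = \alpha_{i_1}\beta_1 \otimes \cdots \otimes \alpha_{i_m}\beta_m$. Your intermediate identity $(p \cdot_\sigma q) * (b \cdot_\sigma f) = (p * b) \cdot_\sigma (q * f)$ is the conceptual phrasing of the slot-by-slot check the paper carries out implicitly; it holds for arbitrary $b$ and $f$ by bilinearity once $p,q$ are pure, so your choice not to separately linearize $b$ is harmless.
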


\begin{proof}
Since both $\cdot_\sigma$ and $*$ are bilinear, we may as well assume that $a,b$ are monomials; write $a = \alpha_1 \otimes \cdots \otimes \alpha_{n+m}$ and $b = \beta_1 \otimes \cdots \otimes \beta_m$. Suppose $\sigma$ is the split $\{i_1, \dots, i_m\}, \{j_1, \dots, j_n\}$ of $[n+m]$. Taking $g = \alpha_{j_1} \otimes \cdots \otimes \alpha_{j_n}$ and $h = \alpha_{i_1} \beta_1 \otimes \cdots \otimes \alpha_{i_m} \beta_m$ makes the identity valid.
\end{proof}

\begin{definition} \label{defn:shuffle-ideal}
A homogeneous subspace $I \subset \cA$ is an {\bf ideal} if $f \in I$ implies that $g * f \in I$ and $g \cdot_\sigma f \in I$ for all $g \in \cA$. A subset of elements of $\cA$ generates an ideal $I$ if $I$ is the smallest ideal that contains the subset.
\end{definition}

\begin{corollary} \label{cor:ideal-gen}
If $f_1, f_2, \dots$ generate an ideal $I$, then every element of $I$ can be written as a sum of elements of the form $h \cdot_\sigma (g * f_i)$ where $g,h \in\cA$.
\end{corollary}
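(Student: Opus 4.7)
The plan is to introduce the subset $J \subseteq \cA$ consisting of all finite sums of elements of the form $h \cdot_\sigma (g * f_i)$ and prove that $J = I$. One direction is immediate: every such element lies in $I$ (since $f_i \in I$ and $I$ is closed under both $*$ and $\cdot_\sigma$), giving $J \subseteq I$. For the reverse inclusion, because $I$ is by definition the smallest ideal containing the $f_i$, it suffices to check that $J$ is itself an ideal and that each $f_i$ lies in $J$.

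The first of these is easy. If $f_i \in \cA_{d,n}$, then I can write $f_i = 1 \cdot_\sigma (1^{\otimes n} * f_i)$, where $1 \in \cA_{d,0} = \bk$, $1^{\otimes n} \in \cA_{0,n}$ is the identity for $*$ on the component $\cA_{d,n}$, and $\sigma$ is the split of $[n]$ with empty first part. For closure of $J$ under $\cdot_\tau$, given $a \in \cA$ and a generator $h \cdot_\sigma (g * f_i) \in J$, associativity of the shuffle products lets me regroup
\[
a \cdot_\tau \bigl( h \cdot_\sigma (g * f_i) \bigr) \;=\; (a \cdot_{\tau'} h) \cdot_{\sigma'} (g * f_i)
\]
for appropriately induced splits $\tau', \sigma'$, and the right-hand side is again of the required form. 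Multiplication from the opposite side is handled by commutativity of $\cdot$.

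The main obstacle, and the only step that uses more than formal properties, is closure of $J$ under the second product $*$; this is precisely the role of Lemma~\ref{lem:rewrite}. By multilinearity and the bigrading I may assume $h \in \cA_{d,m}$, $g * f_i \in \cA_{d,n}$, and $a \in \cA_{e,n+m}$ are homogeneous. Applying Lemma~\ref{lem:rewrite} with the pair $(b,f) = (h,\, g*f_i)$ produces elements $g_j, h_j \in \cA$ with
\[
a * \bigl( h \cdot_\sigma (g * f_i) \bigr) \;=\; \sum_j h_j \cdot_\sigma \bigl( g_j * (g * f_i) \bigr).
\]
Associativity of $*$ then rewrites $g_j * (g * f_i) = (g_j * g) * f_i$, so each summand has the form $h_j \cdot_\sigma ((g_j * g) * f_i)$ and lies in $J$. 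Commutativity of $*$ handles the other side. Hence $J$ is an ideal containing every $f_i$, forcing $I \subseteq J$ and completing the argument.
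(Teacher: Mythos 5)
Your proof is correct and follows the natural route implied by the paper, which states the corollary without proof immediately after Lemma~\ref{lem:rewrite}: you show the set $J$ of sums of elements $h \cdot_\sigma (g * f_i)$ is an ideal containing the $f_i$ (hence contains $I$), with Lemma~\ref{lem:rewrite} supplying closure of $J$ under $*$ and associativity of $*$ and $\cdot_\sigma$ handling the remaining regroupings. This is exactly the argument the paper expects the reader to supply.
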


An ideal is a {\bf monomial ideal} if it has a generating set consisting of monomials. The product of two monomials under either operation $*$ or $\cdot_\sigma$ is again a monomial; so equivalently, an ideal is a monomial ideal if it is linearly spanned by monomials. 

We put a partial ordering on monomials with coefficient $1$: $m \le m'$ if $m'$ is in the ideal generated by $m$. Using Corollary~\ref{cor:ideal-gen}, this is equivalent to saying that there exists monomials $n_0$ and $n_1$ such that $m' = n_1 \cdot_\sigma (n_0 * m)$. We can give another model for this partial ordering. Represent a monomial in $\Sym^d \bk^r$ by an element in $\bZ_{\ge 0}^r$ by taking its exponent vector. We partially order $\bZ_{\ge 0}^r$ pointwise, i.e., $(a_1, \dots, a_r) \le (b_1, \dots, b_r)$ if and only if $a_i \le b_i$ for $i=1,\dots,r$. Represent a monomial $m = w_1 \otimes \cdots \otimes w_n$ (with $w_i \in \Sym^d \bk^r$ a monomial) by a word $\bw(m)$ of length $n$ whose letters are elements in $\bZ_{\ge 0}^r$. Given two words $\bw = (w_1, \dots, w_n)$ and $\bw' = (w'_1, \dots, w'_m)$, we say that $\bw \le \bw'$ if there exist $1 \le i_1 < \cdots < i_n \le m$ such that $w_j \le w'_{i_j}$ for $j=1,\dots,n$. This is the same as asking for the existence of monomials $n_0, n_1$ as above. Call the resulting poset $(\bZ_{\ge 0}^r)^\star$.

\begin{proposition} \label{prop:Z-higman}
Given monomials $m, m'$, we have $m \le m'$ if and only if $\bw(m) \le \bw(m')$. Furthermore, $(\bZ_{\ge 0}^r)^\star$ is a noetherian poset, i.e., given any sequence $x_1, x_2, \dots$ of elements, there always exist $i < j$ such that $x_i \le x_j$.
\end{proposition}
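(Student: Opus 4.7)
The plan is to handle the two claims separately. The first is a direct translation between the algebraic description of $m \le m'$ supplied by Corollary~\ref{cor:ideal-gen} and the combinatorial subword-embedding description, while the second is the well-quasi-ordering of finite words over a well-quasi-order.

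For the equivalence of the two orderings, I would unwind a decomposition $m' = n_1 \cdot_\sigma (n_0 * m)$ factor-by-factor. Writing $n_0 = u_1 \otimes \cdots \otimes u_n$, the inner product gives $n_0 * m = (u_1 w_1) \otimes \cdots \otimes (u_n w_n)$, and the shuffle $\cdot_\sigma$ places these $n$ tensor factors into the positions $i_1 < \cdots < i_n$ of $m'$ dictated by $\sigma$, filling the complementary positions with the letters of $n_1$. Reading exponent vectors off the $i_k$-th letter of $m'$ yields $\bw(w'_{i_k}) = \bw(u_k) + \bw(w_k) \ge \bw(w_k)$, which is exactly the embedding condition $\bw(m) \le \bw(m')$. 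Conversely, from such an embedding I would define $\bw(u_k) := \bw(w'_{i_k}) - \bw(w_k)$ (nonnegative by hypothesis), collect the remaining letters of $m'$ into $n_1$, and choose $\sigma$ to match the positions; running the same computation backwards recovers $m' = n_1 \cdot_\sigma (n_0 * m)$.

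For noetherianity, the plan is to combine two classical results: Dickson's lemma, which says $\bZ_{\ge 0}^r$ under the pointwise order is a well-quasi-order, and Higman's lemma, which says that the set of finite words over any well-quasi-order, with the subsequence-embedding order used to define $(\bZ_{\ge 0}^r)^\star$, is again a well-quasi-order. The main obstacle is really just the bookkeeping in the first part --- keeping the correspondence between $\sigma$, the positions $i_1 < \cdots < i_n$, and the auxiliary monomials $n_0, n_1$ straight in both directions; the noetherianity step is then immediate. If one prefers a self-contained argument, the classical minimal-bad-sequence proof of Higman's lemma can be inlined (choose a bad sequence minimizing word lengths lexicographically, peel off first letters, and combine minimality on the tails with Dickson's lemma on the stripped letters to contradict badness), but citing the classical result keeps the argument compact.
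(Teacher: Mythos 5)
Your proposal is correct and takes essentially the same route as the paper: the paper dispatches the first equivalence with ``we have already discussed the first part'' (referring to the informal unwinding of $m' = n_1 \cdot_\sigma (n_0 * m)$ given just before the proposition), and you simply spell out that same translation more explicitly, with only a cosmetic relabeling of which half of the split receives $n_0 * m$. The noetherianity step is identical --- Dickson's lemma for $\bZ_{\ge 0}^r$ fed into Higman's lemma.
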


\begin{proof}
We have already discussed the first part. The second part is an immediate consequence of Higman's lemma \cite[Theorem 1.3]{draisma-notes} since $\bZ_{\ge 0}^r$ is a noetherian poset (Dickson's lemma).
\end{proof}

\begin{corollary} \label{cor:monomial-fg}
Every monomial ideal of $\cA$ is finitely generated.
\end{corollary}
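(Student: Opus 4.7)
My plan is to deduce this directly from Proposition~\ref{prop:Z-higman}, which gives both the description of the partial order $\le$ on monomials via the word embedding order on $(\bZ_{\ge 0}^r)^\star$ and the fact that this poset is noetherian. The key observation is that in any noetherian poset, every subset has only finitely many minimal elements: any infinite antichain would contradict the noetherian property, since an infinite sequence in such an antichain could have no comparable pair.

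So here is how I would organize the argument. Let $I$ be a monomial ideal of $\cA$, and let $M(I)$ be the set of monomials with coefficient $1$ that lie in $I$. First I would pass to the set $M_{\min}(I)$ of elements of $M(I)$ that are minimal with respect to $\le$. Since $M_{\min}(I)$ is an antichain in $(\bZ_{\ge 0}^r)^\star$ under the identification $m \mapsto \bw(m)$ (by Proposition~\ref{prop:Z-higman}), and $(\bZ_{\ge 0}^r)^\star$ is noetherian, $M_{\min}(I)$ is finite.

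Next, I would show that $M_{\min}(I)$ generates $I$ as an ideal. For this, I would use Higman-style noetherianity again: given any $m \in M(I)$, applying the noetherian property to the sequence $m, m, m, \dots$ together with the minimal elements shows (equivalently, using that there are no infinite descending chains) that some $m' \in M_{\min}(I)$ satisfies $m' \le m$. By definition of $\le$ and Corollary~\ref{cor:ideal-gen}, $m$ lies in the ideal generated by $m'$. Since $I$ is a monomial ideal it is linearly spanned by $M(I)$, and therefore $I$ is generated as an ideal by the finite set $M_{\min}(I)$.

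There is essentially no obstacle once Proposition~\ref{prop:Z-higman} is in hand; the only thing to be careful about is the passage from ``every monomial of $I$ dominates a minimal one'' to ``the finitely many minimal monomials generate $I$.'' That requires the partial-order equivalence from Proposition~\ref{prop:Z-higman} together with the explicit rewriting $m = n_1 \cdot_\sigma (n_0 * m')$, and the observation that a monomial ideal is linearly spanned by the monomials it contains (which follows because both $*$ and $\cdot_\sigma$ of two monomials is a monomial). These are all recorded in the preceding paragraphs, so the corollary reduces to a short bookkeeping argument.
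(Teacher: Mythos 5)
Your argument is correct when $\bk$ is a field, but it silently assumes this, and Section~2 of the paper works over an arbitrary commutative \emph{noetherian ring} $\bk$. The issue is your very first reduction: you define $M(I)$ to be the set of monomials with coefficient~$1$ lying in $I$, and then assert that $I$ is linearly spanned by $M(I)$. Over a field this is fine (if $\alpha m \in I$ with $\alpha \ne 0$, then $m = \alpha^{-1}(\alpha m) \in I$ since $I$ is a $\bk$-submodule), but over, say, $\bk = \bZ$ it fails: the monomial ideal generated by $2x$ contains no monomial with coefficient~$1$ at all, so $M(I)$ would be empty while $I$ is nonzero. More generally, passing to ``minimal coefficient-$1$ monomials'' throws away exactly the information about which coefficients can occur, and that information is essential over a non-field base.

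The paper's proof takes a different route precisely to handle this: it runs a contradiction argument producing a bad sequence $m_1, m_2, \ldots$, strips the coefficients to get $n_i$, invokes Proposition~\ref{prop:Z-higman} together with the standard fact that a noetherian poset admits an infinite ascending subsequence $n_{i_1} \le n_{i_2} \le \cdots$, and \emph{then} appeals to noetherianity of $\bk$ to show the ideal generated by the coefficients of $m_{i_1}, m_{i_2}, \ldots$ stabilizes, yielding the contradiction. That last step is what your argument is missing. If you want to salvage your ``minimal elements'' formulation rather than use a bad-sequence argument, you would need to replace $M_{\min}(I)$ by something that records, for each minimal underlying word $\bw$, a finite generating set for the $\bk$-ideal of coefficients $\alpha$ such that the monomial with word $\bw$ and coefficient $\alpha$ lies in $I$; finiteness of that coefficient ideal is exactly where noetherianity of $\bk$ enters. (Also, as a minor point, the justification ``apply the noetherian property to $m, m, m, \ldots$'' for the existence of a minimal element below $m$ is garbled; what you want is simply that the poset has no infinite strictly descending chains, which follows from noetherianity, so the set $\{m' \in M(I) : m' \le m\}$ has a minimal element.)
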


\begin{proof}
If not, then we can find a sequence of monomials $m_1, m_2, \dots$ such that $m_i$ is not in the ideal generated by $m_1, \dots, m_{i-1}$. Let $n_i$ be the monomial obtained from $m_i$ by replacing its coefficient with $1$. Then, by Proposition~\ref{prop:Z-higman}, there exists $i < j$ such that $n_j$ is in the ideal generated by $n_i$, i.e., $n_i \le n_j$. In fact, we can find infinitely many indices $i_1 < i_2 < \cdots$ such that $n_{i_j} \le n_{i_k}$ for all $j < k$ (this is a well-known property of noetherian posets, see \cite[Proposition 2.2]{catgb} for a proof). The ideal generated by the coefficients of $m_{i_1}, m_{i_2}, \dots$ is finitely generated since $\bk$ is noetherian, say by the coefficients of $m_{i_1}, \dots, m_{i_n}$. But then $m_{i_{n+1}}$ is in the ideal generated by $m_{i_1}, \dots, m_{i_n}$, which is a contradiction.
\end{proof}

Define a total ordering $\preceq$ on all monomials (ignoring their coefficients) of the same bidegree $(d,n)$ as follows. First, define $\preceq$ on $\bZ_{\ge 0}^r$ using lexicographic ordering, i.e., $(a_1, \dots, a_r) \preceq (b_1, \dots, b_r)$ if the first nonzero element of $(b_1 - a_1, \dots, b_r - a_r)$ is positive (note we are only comparing $a$ and $b$ if $\sum_i a_i = \sum_i b_i$). Then compare tensors using lexicographic ordering, i.e., $(w_1, \dots, w_n) \preceq (w_1', \dots, w'_n)$ if there exists $i$ such that $w_1 = w'_1, \dots, w_{i-1} = w'_{i-1}$ and $w_i \preceq w'_i$ but $w_i \ne w'_i$. We will only ever deal with bihomogeneous elements, so we do not need to worry about comparing elements of different bidegrees. This ordering is compatible with both products: 

\begin{lemma} \label{lem:order-prod}
Let $m, m', n$ be monomials. If $m \preceq m'$, then $n*m \preceq n*m'$ and $n \cdot_\sigma m \preceq n \cdot_\sigma m'$. 
\end{lemma}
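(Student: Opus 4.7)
The plan is to verify each inequality directly on monomials, reducing to elementary properties of the lex order on $\bZ_{\ge 0}^r$. Since $\preceq$ ignores scalar coefficients and both products act position-by-position on the tensor slots, I may as well take all monomials to have coefficient $1$ and work entirely with their exponent-vector representatives. Write $m = w_1 \otimes \cdots \otimes w_k$ and $m' = w'_1 \otimes \cdots \otimes w'_k$ (same tensor length $k$ since they share the same bidegree), and fix the index $i$ witnessing $m \preceq m'$, i.e.\ $w_j = w'_j$ for $j < i$ and either $i > k$ (the equality case) or $w_i \prec w'_i$ in the lex order on $\bZ_{\ge 0}^r$.

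For the product $*$, the only case that is not trivially $0 = 0$ is when $n = u_1 \otimes \cdots \otimes u_k$ has the same tensor length $k$; then on exponent vectors $n*m$ has $j$-th slot $u_j + w_j$ and $n*m'$ has $j$-th slot $u_j + w'_j$. Since the lex order on $\bZ_{\ge 0}^r$ (restricted to each total-degree stratum, as in the definition) is translation-invariant, $u_j + w_j = u_j + w'_j$ for $j < i$ and $u_i + w_i \prec u_i + w'_i$. Hence $n*m \preceq n*m'$, strictly when $m \prec m'$.

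For the shuffle $\cdot_\sigma$, write $\sigma = (\{i_1 < \cdots < i_p\},\{j_1 < \cdots < j_k\})$ as a split of $[p+k]$, where $p$ is the tensor length of $n$. By the definition of $\cdot_\sigma$, the monomial $n \cdot_\sigma m$ has the slots of $n$ sitting in positions $i_1,\ldots,i_p$ and the slots $w_1,\ldots,w_k$ sitting in positions $j_1,\ldots,j_k$, and analogously for $n \cdot_\sigma m'$ with $w_l$ replaced by $w'_l$. Thus the two shuffled monomials agree at all positions outside $\{j_1,\ldots,j_k\}$ and at positions $j_l$ for $l < i$; reading left to right, the first position where they can differ is $j_i$, at which the entries are $w_i$ and $w'_i$ respectively, so $n \cdot_\sigma m \preceq n \cdot_\sigma m'$.

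There is no real obstacle here: the content of the lemma is that $*$ acts by coordinatewise addition of exponent vectors and $\cdot_\sigma$ merely re-indexes the slots of its two arguments in a way determined by $\sigma$ and independent of the slot contents, so both preserve lex comparisons. The only care required is the bookkeeping to ensure that the tensor lengths are matched (for $*$ to be nonzero) and that the positions contributed by $n$ in $n \cdot_\sigma m$ and $n \cdot_\sigma m'$ are literally the same, which is built into the definition of the split.
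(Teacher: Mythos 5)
Your proof is correct, and since the paper states Lemma~\ref{lem:order-prod} without supplying a proof, your direct verification on exponent-vector representatives is essentially the intended argument: $*$ adds exponent vectors slot-by-slot, which is compatible with lex by translation invariance, and $\cdot_\sigma$ merely reindexes slots so that the positions coming from $n$ agree in both products and the first possible disagreement occurs at $j_i$. You handle the necessary bookkeeping (matching tensor lengths for $*$, locating the first differing position in the shuffled word) cleanly, so nothing is missing.
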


Given $f \in \cA_{d,n}$, let $\init(f)$ be the largest monomial (together with its coefficient), with respect to $\preceq$, that has a nonzero coefficient in $f$. Given an ideal $I$, let $\init(I)$ be the $\bk$-span of $\{\init(f) \mid f \in I \text{ homogeneous}\}$.

\begin{lemma} \label{lem:monomial}
If $I$ is an ideal, then $\init(I)$ is a monomial ideal.
\end{lemma}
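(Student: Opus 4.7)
The goal is to verify two things: (a) $\init(I)$ is linearly spanned by monomials (so it satisfies the equivalent characterization of monomial ideals given just above the lemma), and (b) $\init(I)$ is closed under both products $*$ and $\cdot_\sigma$ with arbitrary elements of $\cA$. Condition (a) is essentially immediate from the definition, since each generator $\init(f)$ is a single monomial (together with its coefficient). The real content is (b).

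The plan is to prove the following key identity: for any monomial $g \in \cA$ (with coefficient $1$) and any homogeneous $f \in \cA$,
\[
\init(g * f) = g * \init(f), \qquad \init(g \cdot_\sigma f) = g \cdot_\sigma \init(f),
\]
whenever the products on the right are nonzero. Write $f = \sum_i c_i m_i$ with the $m_i$ distinct monomials of coefficient $1$, indexed so that $m_0 = \init(f)/c_0$ is the $\preceq$-largest. By Lemma~\ref{lem:order-prod}, $g*m_i \preceq g*m_0$ and $g \cdot_\sigma m_i \preceq g \cdot_\sigma m_0$ for all $i$. The only possible obstruction is cancellation: we must check that the map $m \mapsto g*m$ (respectively $m \mapsto g \cdot_\sigma m$) is injective on monomials of the appropriate bidegree. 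For $*$, this follows because $\Sym \bk^r$ is an integral domain, so equality $g \cdot m_{i,k} = g \cdot m_{j,k}$ in each tensor slot forces $m_{i,k} = m_{j,k}$. For $\cdot_\sigma$, the split $\sigma$ tells us exactly which tensor slots of $g \cdot_\sigma m$ come from $m$, so $m$ is recoverable from $g \cdot_\sigma m$ and $\sigma$. Hence the $g*m_i$ (resp.\ $g \cdot_\sigma m_i$) are distinct monomials and no cancellation occurs, giving the identity.

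With the key identity in hand, I finish as follows. Every $h \in \init(I)$ is a $\bk$-linear combination $h = \sum_j \lambda_j \init(f_j)$ with $f_j \in I$ bihomogeneous. For a monomial $g \in \cA$,
\[
g * h = \sum_j \lambda_j \, g * \init(f_j) = \sum_j \lambda_j \, \init(g * f_j),
\]
and since $I$ is an ideal each $g * f_j \in I$, so each summand lies in $\init(I)$; hence $g * h \in \init(I)$. Extending $g$ by linearity handles arbitrary $g \in \cA$. The identical argument using $\cdot_\sigma$ in place of $*$ shows $g \cdot_\sigma h \in \init(I)$. Combined with (a), this shows $\init(I)$ is a monomial ideal.

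The only subtle point—really the only place this proof could go wrong—is the no-cancellation step, because in general taking initial terms can drop in order if leading monomials happen to coincide. The observations above (integral-domain injectivity for $*$, combinatorial injectivity for $\cdot_\sigma$) are what rule this out; everything else is formal manipulation of a term order.
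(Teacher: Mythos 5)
Your proof is correct and takes essentially the same route as the paper: the key identity $\init(g*f) = g*\init(f)$ and $\init(g\cdot_\sigma f) = g\cdot_\sigma\init(f)$ for monomials $g$ is exactly what the paper's one-line proof appeals to via Lemma~\ref{lem:order-prod}. You are in fact slightly more careful than the paper, since Lemma~\ref{lem:order-prod} as stated only gives weak order-compatibility, and you explicitly supply the injectivity/no-cancellation observation needed to promote it to the strict statement the argument requires.
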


\begin{proof}
If $m \in \init(I)$, then write $m = \init(f)$ for $f \in I$. Given a monomial $n$, we have $n * m = \init(n * f)$ and $n \cdot_\sigma m = \init(n \cdot_\sigma f)$ by Lemma~\ref{lem:order-prod}, so $n*m \in \init(I)$ and $n \cdot_\sigma m \in \init(I)$. By bilinearity, the same is true for any $n$, so $\init(I)$ is an ideal.
\end{proof}

\begin{lemma} \label{lem:init-contain}
If $I \subseteq J$ are ideals and $\init(I) = \init(J)$, then $I=J$. In particular, if $f_1, f_2, \ldots \in J$ and $\init(f_1), \init(f_2), \dots$ generate $\init(J)$, then $f_1, f_2, \dots$ generate $J$.
\end{lemma}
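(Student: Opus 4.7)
The plan is to prove the first assertion by well-founded induction on $\init(f)$ with respect to the total order $\preceq$, and then derive the ``in particular'' part as a formal consequence. For the latter, setting $I = \langle f_1, f_2, \ldots\rangle$ gives $I \subseteq J$ and
\[
\init(J) = \langle \init(f_1), \init(f_2), \ldots\rangle \subseteq \init(I) \subseteq \init(J),
\]
so $\init(I) = \init(J)$ and the first part forces $I = J$.

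For the induction, I work in a fixed bidegree $(d,n)$, where the set of coefficient-$1$ monomials is finite and hence $\preceq$ is a well-order. The base case $f = 0$ is trivial. For $0 \ne f \in J$, write $\init(f) = cm$ with $m$ a coefficient-$1$ monomial. By Lemma~\ref{lem:monomial}, $\init(I) = \init(J)$ is a monomial ideal, spanned by initial terms of elements of $I$, and by Corollary~\ref{cor:monomial-fg} we may pick finitely many $g_1, \ldots, g_k \in I$ whose initial terms $\init(g_j) = c_j m_j$ (with $c_j \in \bk$ and $m_j$ of coefficient $1$) generate $\init(I)$. Since $cm \in \init(I)$, expanding it as a $\bk$-linear combination of shuffle products of the $c_j m_j$'s against coefficient-$1$ monomials and then isolating the monomial $m$ using $\bk$-linear independence of distinct monomials in $\cA$ yields an expression $c = \sum_l d_l c_{i_l}$ with $d_l \in \bk$ and $m_{i_l} \le m$ for each $l$, together with coefficient-$1$ monomials $h_l, g'_l$ realising $m = h_l \cdot_\sigma (g'_l * m_{i_l})$.

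Now set $F = \sum_l d_l \cdot h_l \cdot_\sigma (g'_l * g_{i_l}) \in I$. Iterating Lemma~\ref{lem:order-prod} shows that each $h_l \cdot_\sigma (g'_l * g_{i_l})$ has $m$-coefficient exactly $c_{i_l}$ and has all other terms supported on monomials strictly $\prec m$ (since the non-initial terms of $g_{i_l}$ are sent to strictly smaller monomials). Summing, the $m$-coefficient of $F$ is $\sum_l d_l c_{i_l} = c$, matching that of $f$, so $\init(f - F) \prec m$ or $f - F = 0$. The induction hypothesis applied to $f - F \in J$ gives $f - F \in I$, hence $f \in I$.

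The main obstacle is handling coefficients over the noetherian ring $\bk$ rather than a field: one cannot simply divide by $c$ to match a single generator $\init(g_j)$, which is why one needs the sharper observation that the set $\{c \in \bk : cm \in \init(I)\}$ is the $\bk$-ideal generated by those $c_j$ whose $m_j$ satisfies $m_j \le m$, forcing the sum-over-$l$ construction of $F$ rather than a single term.
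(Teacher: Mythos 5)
Your proof is correct and rests on the same leading-term-reduction idea as the paper's, though you take a more elaborate route. The paper argues by minimal counterexample: pick $f \in J \setminus I$ with $\init(f)$ minimal for $\preceq$ (in a fixed bidegree, where $\preceq$ is a well-order), note that $\init(f) \in \init(J) = \init(I)$, deduce the existence of some $f' \in I$ with $\init(f') = \init(f)$, and observe that $f - f' \in J \setminus I$ has strictly smaller initial term, a contradiction. The deduction of $f'$ is exactly what you perform when you ``isolate the monomial $m$'': write $cm$ as a $\bk$-combination of initial terms of elements of $I$ and use linear independence of coefficient-$1$ monomials to extract an element of $I$ whose leading monomial is $m$ with coefficient $c$. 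But the paper obtains $f'$ directly from the span definition of $\init(I)$ (summing those $a_j g_j$ whose leading monomial equals $m$), without invoking Lemma~\ref{lem:monomial}, Corollary~\ref{cor:monomial-fg}, or a chosen finite ideal-generating set of $\init(I)$; your passage through ideal generators and Corollary~\ref{cor:ideal-gen} is extra machinery that is not needed for the first assertion, though it does make the noetherian-$\bk$ point transparent and exhibits $F$ explicitly in the ideal generated by the $g_j$. One small gap to close: your claim that the non-initial terms of $g_{i_l}$ are carried to monomials \emph{strictly} $\prec m$ under $h_l \cdot_{\sigma_l}(g'_l * -)$ does not follow from Lemma~\ref{lem:order-prod} alone, which only gives $\preceq$; you also need that this map is injective on coefficient-$1$ monomials (true, since $*$ by a fixed monomial is injective because $\bk[x_1,\dots,x_r]$ is a domain, and $\cdot_{\sigma_l}$ interleaves positions injectively), and this should be stated. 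Your derivation of the ``in particular'' clause is the same as the paper's.
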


\begin{proof}
Suppose $I$ is strictly contained in $J$. Pick $f \in J \setminus I$ with $\init(f)$ minimal with respect to $\preceq$. Then $\init(f) = \init(f')$ for some $f' \in I$. But $\init(f - f')$ is strictly smaller than $\init(f)$ and $f - f' \in J \setminus I$, so we have a contradiction.

For the second statement, take $I$ to be the ideal generated by $f_1, f_2, \dots$.
\end{proof}

\begin{corollary} \label{cor:sa-noeth}
Every ideal of $\cA$ is finitely generated.
\end{corollary}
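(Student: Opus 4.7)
The plan is to run the standard Gröbner-style argument using the initial-ideal machinery developed in Lemmas~\ref{lem:monomial} and~\ref{lem:init-contain} together with the fact that monomial ideals of $\cA$ are finitely generated (Corollary~\ref{cor:monomial-fg}).

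Concretely, let $I \subseteq \cA$ be an arbitrary (bihomogeneous) ideal. First I would form its initial ideal $\init(I)$ with respect to the total order $\preceq$. By Lemma~\ref{lem:monomial}, $\init(I)$ is a monomial ideal, and hence by Corollary~\ref{cor:monomial-fg} it is finitely generated, say by monomials $m_1, \dots, m_N$. By definition of $\init(I)$, each $m_i$ arises as $\init(f_i)$ for some homogeneous $f_i \in I$.

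Now I would apply the second part of Lemma~\ref{lem:init-contain}: since the initial terms $\init(f_1), \dots, \init(f_N)$ generate $\init(I)$, the elements $f_1, \dots, f_N$ themselves generate $I$ as an ideal. This completes the proof.

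There is essentially no obstacle: all the substantive work (the Higman/Dickson input via Proposition~\ref{prop:Z-higman}, the noetherianity of $\bk$ used inside Corollary~\ref{cor:monomial-fg}, and the compatibility of $\preceq$ with both products in Lemma~\ref{lem:order-prod}) has already been carried out, and the present corollary is simply the packaging step. The only thing worth checking is that one may indeed choose the $m_i$ to be initial monomials of elements of $I$ (rather than merely belonging to $\init(I)$), but this is immediate from the definition $\init(I) = \bk\!\cdot\!\{\init(f) \mid f \in I \text{ homogeneous}\}$, since a finite generating set of a monomial ideal may always be chosen from among its spanning monomials.
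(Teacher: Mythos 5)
Your proof is correct and is exactly the paper's argument: the paper's proof of Corollary~\ref{cor:sa-noeth} is simply ``Combine Corollary~\ref{cor:monomial-fg}, Lemma~\ref{lem:monomial}, and Lemma~\ref{lem:init-contain},'' which is precisely the Gr\"obner-style packaging you have spelled out. Your extra remark about choosing the generating monomials of $\init(I)$ to be actual initial terms of elements of $I$ is a reasonable point to flag and is correctly justified.
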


\begin{proof}
Combine Corollary~\ref{cor:monomial-fg}, Lemma~\ref{lem:monomial}, and Lemma~\ref{lem:init-contain}.
\end{proof}

\subsection{Shuffling any ring} \label{ss:shuffle-any}

Let $B = \bigoplus_{d \ge 0} B_d$ be a graded commutative ring with $B_0 = \bk$. We can generalize the definition of $\cA$:
\[
\cS(B) = \bigoplus_{n, d \ge 0} B_d^{\otimes n}.
\]
The definitions of the shuffle products $\cdot_\sigma$ and of the $*$-product that we made for $\cA$ carry over to $\cS(B)$ without change. Ideals of $\cS(B)$ are defined the same way as in Definition~\ref{defn:shuffle-ideal}. Note that $\cS(\bk[x_1, \dots, x_r]) = \cA$. 

Given a graded ring homomorphism $\phi \colon A \to B$, we get a map $\cS(\phi) \colon \cS(A) \to \cS(B)$ which commutes with all shuffle products and the $*$-product. Furthermore, $\cS(\phi)$ is injective, respectively surjective, if $\phi$ has the corresponding property.

\begin{corollary} \label{cor:B-fg-ideal}
If $B$ is generated by $B_1$ as a $\bk$-algebra and $B_1$ is a finitely generated $\bk$-module, then every ideal of $\cS(B)$ is finitely generated.
\end{corollary}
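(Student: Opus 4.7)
The plan is to reduce to the finite generation statement already established for $\cA$ in Corollary~\ref{cor:sa-noeth}, using the functoriality of $B \mapsto \cS(B)$ noted in the paragraph preceding the corollary.

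First I would choose a surjection of $\bk$-modules $\bk^r \twoheadrightarrow B_1$, where $r$ is the number of generators of $B_1$ as a $\bk$-module. Since $B$ is generated in degree $1$ as a $\bk$-algebra, the universal property of $\Sym$ extends this to a surjection of graded $\bk$-algebras $\phi \colon \bk[x_1,\dots,x_r] \twoheadrightarrow B$. Applying $\cS$ and using the fact recorded just before the corollary, $\cS(\phi) \colon \cA \to \cS(B)$ is a surjective map that commutes with every shuffle product $\cdot_\sigma$ and with $*$.

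Given any ideal $I \subseteq \cS(B)$, I would form the preimage $J := \cS(\phi)^{-1}(I) \subseteq \cA$. Because $\cS(\phi)$ is compatible with both products, for any $f \in J$ and any $g \in \cA$ one has $\cS(\phi)(g * f) = \cS(\phi)(g) * \cS(\phi)(f) \in I$ and similarly for $\cdot_\sigma$, so $J$ is an ideal in the sense of Definition~\ref{defn:shuffle-ideal}. By Corollary~\ref{cor:sa-noeth}, $J$ is finitely generated, say by $f_1,\dots,f_N$.

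Finally I would verify that $\cS(\phi)(f_1),\dots,\cS(\phi)(f_N)$ generate $I$. Given $h \in I$, surjectivity of $\cS(\phi)$ produces $\tilde h \in \cA$ with $\cS(\phi)(\tilde h)=h$, and by construction $\tilde h \in J$. By Corollary~\ref{cor:ideal-gen}, $\tilde h$ is a sum of terms $a \cdot_\sigma (b * f_i)$, and applying $\cS(\phi)$ expresses $h$ as the corresponding sum of terms $\cS(\phi)(a) \cdot_\sigma (\cS(\phi)(b) * \cS(\phi)(f_i))$, which exhibits $h$ in the ideal generated by the $\cS(\phi)(f_i)$. There is no real obstacle here; the only points needing care are the functorial compatibility of $\cS$ with both products (immediate from the definitions, since the shuffle and $*$ products are defined by the same recipe in $B^{\otimes n}$ regardless of $B$) and checking that preimages of ideals are ideals, which follows from that compatibility.
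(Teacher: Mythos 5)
Your proof is correct and follows the same route as the paper: choose a surjection $\bk[x_1,\dots,x_r]\twoheadrightarrow B$, apply $\cS$ to get a product-compatible surjection $\cA\twoheadrightarrow\cS(B)$, pull back the given ideal, and invoke Corollary~\ref{cor:sa-noeth}. You have simply spelled out the details (that preimages of ideals are ideals and that images of generators generate) which the paper leaves implicit.
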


\begin{proof}
By assumption, there is a surjective map $\bk[x_1, \dots, x_r] \to B$ for some $r$. In particular, there is a surjective map $\cA \to \cS(B)$. By Corollary~\ref{cor:sa-noeth}, the ideals of $\cA$ are finitely generated, so the same is true for $\cS(B)$.
\end{proof}

\section{Coalgebraic algebra} \label{sec:coalgebraic}

Now we assume that $\bk$ is a field of characteristic $0$. Let $B$ be a graded $\bk$-algebra which is generated by $B_1$ and such that $\dim_\bk B_1 < \infty$. Set $\cB = \cS(B)$. Define 
\begin{align*}
\cB^\Sigma = \bigoplus_{n, d \ge 0} (B_d^{\otimes n})^{\Sigma_n}, \qquad
\cB_\Sigma = \bigoplus_{n, d \ge 0} (B_d^{\otimes n})_{\Sigma_n},
\end{align*}
where $\Sigma_n$ acts by permuting the tensor factors, and the superscript and subscript denote taking invariants and coinvariants, respectively. (In the introduction, $\cA_\Sigma$ is $\cB_\Sigma$ when $B = \bk[x_1, \dots, x_r]$.) $\cB^\Sigma$ is a subalgebra of $\cB$ with respect to the $*$-product, but is not closed under the shuffle products. To fix this, define $f \cdot g = \sum_\sigma f \cdot_\sigma g$; then $\cB^\Sigma$ is closed under $\cdot$. Furthermore, $\cdot$ is both commutative and associative. Both are bigraded by $(d,n)$, let $\cB^\Sigma_{d,n}$ and $(\cB_\Sigma)_{d,n}$ denote the bigraded pieces. Again, we will only consider subspaces of $\cB^\Sigma$ and $\cB_\Sigma$ that are homogeneous with respect to the bigrading $(d,n)$.

For each $d,n$, define a linear projection 
\begin{align*}
\pi \colon \cB_{d,n} &\to \cB^\Sigma_{d,n}\\
w_1 \otimes \cdots \otimes w_n &\mapsto \frac{1}{n!} \sum_{\sigma \in \Sigma_n} w_{\sigma(1)} \otimes \cdots \otimes w_{\sigma(n)}.
\end{align*}
If $f \in \cB^\Sigma_{d,n}$, then $\pi(f) = f$, so $\pi$ is surjective. Set $\pi' = n! \pi$. We also denote the direct sum of these maps by $\pi \colon \cB \to \cB^\Sigma$ and $\pi' \colon \cB \to \cB^\Sigma$. Also define 
\begin{align*}
\fS \colon (\cB_{\Sigma})_{d,n} &\to \cB^\Sigma_{d,n}\\
w_1 \cdots w_n &\mapsto \sum_{\sigma \in \Sigma_n} w_{\sigma(1)} \otimes \cdots \otimes w_{\sigma(n)}.
\end{align*}
Then $\fS$ is a linear isomorphism, since $\frac{1}{n!} \fS$ is the inverse of the composition $\cB^\Sigma_{d,n} \to \cB_{d,n} \to (\cB_\Sigma)_{d,n}$. Denote the direct sum of these maps by $\fS \colon \cB_\Sigma \to \cB^\Sigma$.

\subsection{Properties of $\cB^\Sigma$}

\begin{lemma} \label{lem:pi-prop}
\begin{enumerate}[\rm (a)]
\item If $f \in \cB^\Sigma_{d,n}$ and $g \in \cB_{e,n}$ are homogeneous, then $\pi(g * f) = \pi(g) * f$.
\item If $f \in \cB_{d,n}$ and $g \in \cB_{d,m}$, then $\binom{n+m}{n} \pi(f \cdot_\sigma g) = \pi(f) \cdot \pi(g)$ for any split $\sigma$ of $[n+m]$.
\end{enumerate}
\end{lemma}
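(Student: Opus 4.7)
For part (a), the $*$-product acts componentwise in the tensor factors, so it commutes with the $\Sigma_n$-action in the sense that $(u \cdot g)*(u \cdot f) = u \cdot (g*f)$ for every $u \in \Sigma_n$. Since $f \in \cB^\Sigma_{d,n}$ we have $u \cdot f = f$, hence $(u \cdot g)*f = u \cdot (g*f)$. Averaging over $\Sigma_n$ gives $\pi(g)*f = \pi(g*f)$.

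For part (b), I would proceed in two steps. First, I would check that $\pi(f \cdot_\sigma g)$ does not depend on the split $\sigma$: given two splits $\sigma, \sigma'$ of $[n+m]$, there is a (unique) shuffle permutation $w \in \Sigma_{n+m}$ sending the $f$-positions of $\sigma$ to those of $\sigma'$ in order (and likewise for the $g$-positions), and for that $w$, $w \cdot (f \cdot_\sigma g) = f \cdot_{\sigma'} g$. Applying the symmetrization $\pi$ collapses this difference.

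Second, I would compute $\pi(f) \cdot \pi(g)$ directly. Fix the reference split $\sigma_0 = (\{1,\dots,n\},\{n+1,\dots,n+m\})$, so $f \cdot_{\sigma_0} g = f \otimes g$, and for each split $\alpha$ pick the shuffle representative $w_\alpha \in \Sigma_{n+m}$ with $w_\alpha \cdot (f \otimes g) = f \cdot_\alpha g$. The $w_\alpha$ form a system of coset representatives of $\Sigma_n \times \Sigma_m \subseteq \Sigma_{n+m}$, so $(u,v,\alpha) \mapsto w_\alpha(u,v)$ is a bijection from $\Sigma_n \times \Sigma_m \times \{\text{splits}\}$ onto $\Sigma_{n+m}$. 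Unpacking $\cdot_\alpha$ position-by-position yields $(u \cdot f) \cdot_\alpha (v \cdot g) = w_\alpha(u,v) \cdot (f \otimes g)$, and hence
\[
\pi(f) \cdot \pi(g) = \sum_\alpha \pi(f) \cdot_\alpha \pi(g) = \frac{1}{n!\,m!} \sum_{\alpha,u,v} (u \cdot f) \cdot_\alpha (v \cdot g) = \frac{1}{n!\,m!}\sum_{w \in \Sigma_{n+m}} w \cdot (f \otimes g),
\]
which equals $\tfrac{(n+m)!}{n!\,m!}\,\pi(f \otimes g) = \binom{n+m}{n}\,\pi(f \cdot_{\sigma_0} g)$. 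Combined with the first step, this gives $\binom{n+m}{n}\,\pi(f \cdot_\sigma g)$ for any split $\sigma$.

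The main obstacle is verifying the identity $(u \cdot f) \cdot_\alpha (v \cdot g) = w_\alpha(u,v) \cdot (f \otimes g)$; this is pure bookkeeping once the conventions for how $\Sigma_{n+m}$ permutes tensor factors and how $\cdot_\alpha$ places them are pinned down, but it is the pivot that converts the three-fold sum over $(\alpha,u,v)$ into a single sum over $\Sigma_{n+m}$, from which the binomial coefficient drops out cleanly.
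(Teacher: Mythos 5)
Your proof is correct and takes essentially the same approach as the paper. In part (a) you phrase the argument conceptually via equivariance of $*$ under the diagonal $\Sigma_n$-action rather than writing out the change of variables $\sigma \mapsto \sigma\tau^{-1}$ as the paper does, but the computation is identical; in part (b) you flesh out, via the coset decomposition of $\Sigma_{n+m}$ by $\Sigma_n \times \Sigma_m$, the one-line claim the paper makes (that shuffle-then-symmetrize equals symmetrize-individually-then-shuffle-in-all-ways), so you are supplying the bookkeeping the paper elides rather than taking a different route.
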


\begin{proof}
(a) Both $\pi(g * f)$ and $\pi(g) * f$ are bilinear in $g$ and $f$, so we may assume that $g = g_1 \otimes \cdots \otimes g_n$ for $g_i \in B_e$ and that $f = \sum_{\sigma \in \Sigma_n} f_{\sigma(1)} \otimes \cdots \otimes f_{\sigma(n)}$ for $f_i \in B_d$. Then 
\begin{align*}
\pi(g) * f &= \frac{1}{n!} \sum_{\sigma, \tau \in \Sigma_n} g_{\tau(1)} f_{\sigma(1)} \otimes \cdots \otimes g_{\tau(n)} f_{\sigma(n)},\\
\pi(g * f) &= \frac{1}{n!} \sum_{\sigma, \tau \in \Sigma_n} g_{\tau(1)} f_{\sigma \tau(1)} \otimes \cdots \otimes g_{\tau(n)} f_{\sigma \tau(n)}.
\end{align*}
But these sums are the same: in the second, do the change of variables $\sigma \mapsto \sigma \tau^{-1}$. We conclude that $\pi(g * f) = \pi(g) * f$.

(b) Again, both $\pi(f \cdot_\sigma g)$ and $\pi(f) \cdot \pi(g)$ are bilinear in $f,g$, so we may assume that $f = f_1 \otimes \cdots \otimes f_n$ with $f_i \in B_d$ and that $g = g_1 \otimes \cdots \otimes g_m$ with $g_i \in B_d$. To compute $\pi'(f \cdot_\sigma g)$, we first shuffle together $f$ and $g$ and symmetrize; note that this is the same as symmetrizing $f$ and $g$ individually and then shuffling them together in all possible ways. So
\[
\pi'(f \cdot_\sigma g) = \sum_\tau \pi'(f) \cdot_\tau \pi'(g) = \pi'(f) \cdot \pi'(g).
\]
The equation using $\pi$ in place of $\pi'$ follows immediately.
\end{proof}

\begin{definition}
A homogeneous subspace $I \subseteq \cB^\Sigma$ is an {\bf ideal} if $f \in I$ implies that $g \cdot f \in I$ and $g * f \in I$ for all $g \in \cB^\Sigma$.
\end{definition}

\begin{proposition} \label{prop:invt-noeth}
Every ideal of $\cB^\Sigma$ is finitely generated.
\end{proposition}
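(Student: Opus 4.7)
The plan is to lift an arbitrary ideal $I \subseteq \cB^\Sigma$ to an ideal of $\cB = \cS(B)$, where we already have a noetherianity theorem, and then push a finite generating set back down through the projection $\pi$ using the compatibilities established in Lemma~\ref{lem:pi-prop}.

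Concretely, let $I \subseteq \cB^\Sigma$ be an ideal, and let $\tilde I \subseteq \cB$ be the ideal of $\cB$ generated by $I$ (viewing $\cB^\Sigma \subseteq \cB$). By Corollary~\ref{cor:B-fg-ideal}, $\tilde I$ is finitely generated. Since $\tilde I$ is generated by the elements of $I$, any finite generating set of $\tilde I$ can be rewritten using Corollary~\ref{cor:ideal-gen} as a finite $\bk$-linear combination of expressions of the form $h \cdot_\sigma (g * f)$ with $f \in I$; collecting the finitely many $f$'s that appear, I obtain elements $f_1, \ldots, f_s \in I$ which generate $\tilde I$ as an ideal of $\cB$.

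I then claim $f_1, \ldots, f_s$ generate $I$ as an ideal of $\cB^\Sigma$. Given $f \in I \subseteq \tilde I$, Corollary~\ref{cor:ideal-gen} writes
\[
f = \sum_j h_j \cdot_{\sigma_j} (g_j * f_{k_j})
\]
for some $g_j, h_j \in \cB$. Apply $\pi$ to both sides. On the left, $\pi(f) = f$ because $f \in \cB^\Sigma$. On the right, since each $f_{k_j}$ lies in $\cB^\Sigma$, Lemma~\ref{lem:pi-prop}(a) gives $\pi(g_j * f_{k_j}) = \pi(g_j) * f_{k_j}$, which again lies in $\cB^\Sigma$; then Lemma~\ref{lem:pi-prop}(b) converts $\pi$ of the shuffle product with $h_j$ into a nonzero scalar multiple (a reciprocal binomial coefficient, which is invertible in characteristic zero) of $\pi(h_j) \cdot \bigl(\pi(g_j) * f_{k_j}\bigr)$. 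Summing these equalities expresses $f$ as a $\cB^\Sigma$-linear combination of $\cdot$ and $*$ products of $\pi(h_j), \pi(g_j) \in \cB^\Sigma$ against the $f_{k_j}$, which places $f$ in the $\cB^\Sigma$-ideal generated by $f_1, \ldots, f_s$, proving the proposition.

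The main obstacle is ensuring that Lemma~\ref{lem:pi-prop}(a) actually applies when we project: it requires the inner factor of the $*$-product to already be symmetric. This is precisely why the generators $f_i$ must be chosen inside $I \subseteq \cB^\Sigma$ (not merely inside $\tilde I$), and why in the expression $h_j \cdot_{\sigma_j} (g_j * f_{k_j})$ we need $f_{k_j}$ sitting as the rightmost factor of the inner $*$-product before any shuffling. The characteristic zero hypothesis then enters solely to invert the binomial coefficients produced by Lemma~\ref{lem:pi-prop}(b).
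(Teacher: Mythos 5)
Your proof is correct and follows the same route as the paper: lift the ideal to $\cB$, apply the noetherianity result for $\cS(B)$, choose a finite set of generators lying inside $I$, and push the resulting expressions back to $\cB^\Sigma$ via $\pi$ using Lemma~\ref{lem:pi-prop}. In fact you spell out the step ``we may assume $f_i \in I$'' in slightly more detail than the paper does, but the argument is otherwise identical.
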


\begin{proof}
Given an ideal $J$ of $\cB^\Sigma$, let $I$ be the ideal in $\cB$ generated by $J$. By Corollary~\ref{cor:B-fg-ideal}, $I$ is finitely generated, say by $f_1, \dots, f_N$. We may assume that the $f_i$ belong to $J$. We claim that they also generate $J$ as an ideal in $\cB^\Sigma$. By Corollary~\ref{cor:ideal-gen}, every $f \in J$ can be written as a sum of terms of the form $h \cdot_\sigma (g * f_i)$ where $h, g \in \cB$. By Lemma~\ref{lem:pi-prop} we get $\pi(h \cdot_\sigma (g * f_i)) = \binom{n+m}{n}^{-1} \pi(h) \cdot (\pi(g) * f_i)$ where $h \in \cB_{d,n}$ and $g*f_i \in \cB_{d,m}$. Since $\pi(f) = f$, we conclude that every element of $J$ can be written as a sum of terms of the form $h' \cdot (g' * f_i)$ where $h', g' \in \cB^\Sigma$.
\end{proof}

For fixed $d$, $\bigoplus_n \cB_{d,n}^\Sigma$ is a free divided power algebra under $\cdot$, and hence is freely generated in degree $n=1$. So we can define a comultiplication $\Delta \colon \cB^\Sigma \to \cB^\Sigma \otimes \cB^\Sigma$ by $w \mapsto 1 \otimes w + w \otimes 1$ when $w \in \cB^\Sigma_{d,1}$, and requiring that it is an algebra homomorphism for $\cdot$.

For the following statement, we extend the products $\cdot$ and $*$ to $\cB^\Sigma \otimes \cB^\Sigma$ componentwise.

\begin{lemma} \label{lem:hopf-ring}
Pick $\bx \in \cB^\Sigma_{e,n}$, $\by \in \cB^\Sigma_{d,m}$, and $\bv \in \cB^\Sigma_{d,n}$. Then 
\begin{align*}
\Delta(\by \cdot \bv) &= \Delta(\by) \cdot \Delta(\bv)\\
\Delta(\bx * \bv) &= \Delta(\bx) * \Delta(\bv).
\end{align*}
\end{lemma}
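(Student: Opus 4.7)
The first identity is essentially the definition of $\Delta$: since $\by$ and $\bv$ both lie in the $d$-component $\bigoplus_n \cB^\Sigma_{d,n}$, and $\Delta$ restricted to this component is by construction the unique $\cdot$-algebra homomorphism that is primitive on $B_d = \cB^\Sigma_{d,1}$, the compatibility $\Delta(\by \cdot \bv) = \Delta(\by) \cdot \Delta(\bv)$ is tautological. So only the second identity requires work.

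For the second identity, the plan is to reduce by bilinearity to ``monomial'' elements $\bx = x_1 \cdot x_2 \cdots x_n$ (with $x_i \in B_e = \cB^\Sigma_{e,1}$) and $\bv = v_1 \cdot v_2 \cdots v_n$ (with $v_i \in B_d$). In characteristic zero such $\cdot$-products span $\cB^\Sigma_{e,n}$ and $\cB^\Sigma_{d,n}$, since $x_1 \cdot x_2 \cdots x_n = \sum_{\sigma \in \Sigma_n} x_{\sigma(1)} \otimes \cdots \otimes x_{\sigma(n)}$ and the symmetrization $\pi$ is surjective. The key computation I would verify by a change of summation variables in the resulting double sum is the identity
\[
\bx * \bv = \sum_{\gamma \in \Sigma_n} (x_1 v_{\gamma(1)}) \cdot (x_2 v_{\gamma(2)}) \cdots (x_n v_{\gamma(n)}) \quad \text{in } \cB^\Sigma_{d+e,n}.
\]

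Applying $\Delta$ to this formula, and using the first identity together with the primitive rule on $B_{d+e}$, yields
\[
\Delta(\bx * \bv) = \sum_{\gamma \in \Sigma_n} \sum_{U \subseteq [n]} \Bigl(\prod_{i \in U} (x_i v_{\gamma(i)})\Bigr) \otimes \Bigl(\prod_{j \in U^c} (x_j v_{\gamma(j)})\Bigr),
\]
with $\cdot$-products understood in each tensor factor. On the other hand, expanding $\Delta(\bx)$ and $\Delta(\bv)$ analogously and multiplying componentwise, only those cross-terms whose $n$-gradings match in each tensor slot survive (since $*$ vanishes otherwise); reapplying the displayed formula to rewrite each surviving $*$-product of $\cdot$-monomials as a sum of $\cdot$-products gives an expansion of $\Delta(\bx) * \Delta(\bv)$ indexed by pairs $(A, B)$ of equal-cardinality subsets of $[n]$ together with permutations $\delta \in \Sigma_{|A|}$ and $\epsilon \in \Sigma_{n - |A|}$.

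The conclusion is then a bijection between the two indexing sets: given $(\gamma, U)$ on the left-hand side, set $A := U$ and $B := \gamma(U)$, and let $\delta, \epsilon$ be the permutations induced by $\gamma|_U \colon U \to \gamma(U)$ and $\gamma|_{U^c} \colon U^c \to \gamma(U)^c$ after identifying each of these ordered subsets of $[n]$ with $\{1, \ldots, k\}$ via its natural ordering. Checking that this is a bijection and that matched summands yield identical tensors in $\cB^\Sigma \otimes \cB^\Sigma$ is then purely combinatorial. The main obstacle I anticipate is notational bookkeeping; the underlying content is simply that partitioning a permutation of $[n]$ into its restrictions to $U$ and $U^c$ is equivalent to specifying the image $\gamma(U)$ together with a pair of permutations on the pieces.
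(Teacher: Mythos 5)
Your approach is correct and is essentially the same as the paper's. The central ingredient is identical: after reducing by bilinearity and surjectivity of symmetrization to $\cdot$-monomials $\bx = x_1 \cdots x_n$, $\bv = v_1 \cdots v_n$, both proofs rely on the key expansion $\bx * \bv = \sum_{\gamma \in \Sigma_n} (x_1 v_{\gamma(1)}) \cdots (x_n v_{\gamma(n)})$, apply the primitive rule to get $\Delta(\bx * \bv)$, expand $\Delta(\bx) * \Delta(\bv)$ the same way, and match the two expansions via the bijection $(\gamma, U) \leftrightarrow (T, S, \delta, \epsilon)$ encoding that a permutation $\gamma$ of $[n]$ together with a subset $U$ is the same data as the pair $(U, \gamma(U))$ with the two induced order-preserving-relabeled permutations. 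The paper carries out the bookkeeping you deferred (using the $\pi'$ notation and a tabular display for the change of variables), but your outline correctly identifies every structural step and nothing you propose would fail when pushed through in detail.
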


\begin{subeqns}
\begin{proof}
The first identity follows from the way we defined $\Delta$.

For the second identity, first note that both $\Delta(\bx * \bv)$ and $\Delta(\bx) * \Delta(\bv)$ are bilinear in $\bx$ and $\bv$. So we can assume, without loss of generality, that $\bx = \sum_{\sigma \in \Sigma_n} x_{\sigma(1)} \otimes \cdots \otimes x_{\sigma(n)}$ for some $x_1 \otimes \cdots \otimes x_n \in \cB_{e,n}$ and that $\bv = \sum_{\sigma \in \Sigma_n} v_{\sigma(1)} \otimes \cdots \otimes v_{\sigma(n)}$ for some $v_1 \otimes \cdots \otimes v_n \in \cB_{d,n}$.

Then $\bv = v_1 \cdots v_n$ and $\bx = x_1 \cdots x_n$ (here we are using the $\cdot$ product). So 
\begin{align*}
\Delta(\bv) = \Delta(v_1) \cdots \Delta(v_n) &= \sum_{S \subseteq [n]} \pi'(v_S) \otimes \pi'(v_{[n] \setminus S})\\
\Delta(\bx) = \Delta(x_1) \cdots \Delta(x_n) &= \sum_{S \subseteq [n]} \pi'(x_S) \otimes \pi'(x_{[n] \setminus S})
\end{align*}
where the sum is over all subsets $S = \{s_1 < \cdots < s_j\}$ of $[n]$ and $v_S = v_{s_1} \otimes \cdots \otimes v_{s_j}$ and $x_S = x_{s_1} \otimes \cdots \otimes x_{s_j}$. This gives
\begin{align}
\Delta(\bx) * \Delta(\bv) &= (\sum_{T \subseteq [n]} \pi'(x_T) \otimes \pi'(x_{[n] \setminus T})) * (\sum_{S \subseteq [n]} \pi'(v_S) \otimes \pi'(v_{[n] \setminus S})) \notag \\
&= \sum_{S, T \subseteq [n]} \pi'(x_T * \pi'(v_S)) \otimes \pi'(x_{[n] \setminus T} * \pi'(v_{[n] \setminus S})), \label{eqn:Delta(b)Delta(x)}
\end{align}
where in the second equality we used Lemma~\ref{lem:pi-prop}(a). On the other hand,
\begin{align*}
\bx * \bv &= \sum_{\sigma, \tau \in \Sigma_n} x_{\sigma(1)} v_{\tau(1)} \otimes \cdots \otimes x_{\sigma(n)} v_{\tau(n)}\\
&= \sum_{\sigma \in \Sigma_n} x_{\sigma(1)} v_1 \cdots x_{\sigma(n)} v_n,
\end{align*}
where in the second sum we are using the $\cdot$ product. In particular, 
\begin{align} \label{eqn:Delta(bx)}
\Delta(\bx * \bv) &= \sum_{\sigma \in \Sigma_n} \sum_{S \subseteq [n]} \pi'(xv_{\sigma, S}) \otimes \pi'(xv_{\sigma, [n] \setminus S})
\end{align}
where $xv_{\sigma,S} = x_{\sigma(s_1)} v_{s_1} \otimes \cdots \otimes x_{\sigma(s_j)} v_{s_j}$ if $S = \{s_1 < \cdots < s_j\}$. We also write $[n] \setminus S = \{s_{j+1} < \cdots < s_n\}$.

We now show that the expressions \eqref{eqn:Delta(b)Delta(x)} and \eqref{eqn:Delta(bx)} for $\Delta(\bx) * \Delta(\bv)$ and $\Delta(\bx * \bv)$ can be identified. For \eqref{eqn:Delta(b)Delta(x)}, we are summing over expressions of the form
\[
\begin{array}{l|r|l}
x  &t_{\sigma(1)} \cdots t_{\sigma(j)} & t_{\alpha(j+1)} \cdots t_{\alpha(n)} \\
v & s_{\tau \sigma(1)} \cdots s_{\tau \sigma(j)} & s_{\beta \alpha(j+1)} \cdots s_{\beta \alpha(n)}
\end{array},
\]
which is shorthand for 
\[
(x_{t_{\sigma(1)}} v_{s_{\tau\sigma(1)}} \otimes \cdots \otimes x_{t_{\sigma(j)}} v_{s_{\tau \sigma(j)}}) \otimes 
(x_{t_{\alpha(j+1)}} v_{s_{\beta \alpha(j+1)}} \otimes \cdots \otimes x_{t_{\alpha(n)}} v_{s_{\beta \alpha(n)}}),
\]
and here $\sigma, \tau \in \Sigma_j$ and $\alpha, \beta \in \Sigma_{n-j}$ are freely chosen and $S$ and $T$ are freely chosen subsets of size $j$. We can do a change of variables $\rho = \tau \sigma$ and $\gamma = \beta \alpha$ to get 
\begin{align} \label{eqn:Deltabx-2}
\begin{array}{l|r|l}
x  &t_{\sigma(1)} \cdots t_{\sigma(j)} & t_{\alpha(j+1)} \cdots t_{\alpha(n)} \\
v & s_{\rho(1)} \cdots s_{\rho(j)} & s_{\gamma(j+1)} \cdots s_{\gamma(n)}
\end{array}.
\end{align}
On the other hand, \eqref{eqn:Delta(bx)} is a sum over expressions of the form
\begin{align} \label{eqn:Deltabx-3}
\begin{array}{l|r|l}
x & \lambda(s_{\rho(1)}) \cdots \lambda(s_{\rho(j)}) & \lambda(s_{\gamma(j+1)}) \cdots \lambda (s_{\gamma(n)}) \\
v & s_{\rho(1)} \cdots s_{\rho(j)} & s_{\gamma(j+1)} \cdots s_{\gamma(n)}
\end{array}
\end{align}
where $\rho \in \Sigma_j$, $\gamma \in \Sigma_{n-j}$, $\lambda \in \Sigma_n$ are chosen freely and $S$ is a freely chosen subset of size $j$. To identify this with \eqref{eqn:Deltabx-2}, set $\{t_1, \dots, t_j\} = \{\lambda(s_{\rho(1)}), \dots, \lambda(s_{\rho(j)})\}$ and $\{t_{j+1}, \dots, t_n\} = \{\lambda(s_{\gamma(j+1)}), \dots, \lambda(s_{\gamma(n)})\}$ (there is a unique identification so that $t_1 < \cdots < t_j$ and $t_{j+1} < \cdots < t_n$). Now there is a unique choice of $\sigma \in \Sigma_j$ and $\alpha \in \Sigma_{n-j}$ so that \eqref{eqn:Deltabx-2} and \eqref{eqn:Deltabx-3} agree, so we conclude that $\Delta(\bx * \bv) = \Delta(\bx) * \Delta(\bv)$.
\end{proof}
\end{subeqns}

We can now present a symmetrized version of Lemma~\ref{lem:rewrite}:
\begin{lemma}
Given $f \in \cB_{d,n}^\Sigma$, $b \in \cB_{d,m}^\Sigma$, and $a \in \cB_{e,n+m}^\Sigma$, we have
\[
a * (b \cdot f) = \Delta(a) * (b \otimes f).
\]
\end{lemma}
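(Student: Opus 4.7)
The plan is a direct expansion that parallels the proof of Lemma~\ref{lem:hopf-ring}. By $\bk$-multilinearity in $a$, $b$, and $f$, it suffices to treat the case where each is a full symmetrization of a pure tensor: write $a = \alpha_1 \cdots \alpha_{n+m}$, $b = \beta_1 \cdots \beta_m$, $f = \phi_1 \cdots \phi_n$, all products being the $\cdot$-product applied to elements of the tensor-degree-one slice $B_\bullet = \cB^\Sigma_{\bullet, 1}$.

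For the right-hand side, since $\Delta$ is a $\cdot$-algebra map and $\Delta(\alpha) = 1 \otimes \alpha + \alpha \otimes 1$ for $\alpha \in B_e$, expanding gives
\[
\Delta(a) \;=\; \prod_{i=1}^{n+m} \bigl(\alpha_i \otimes 1 + 1 \otimes \alpha_i\bigr) \;=\; \sum_{S \subseteq [n+m]} \alpha_S \otimes \alpha_{[n+m] \setminus S},
\]
where $\alpha_S = \prod_{i \in S} \alpha_i$ under $\cdot$. Bidegree matching in $\Delta(a) * (b \otimes f)$ forces $|S| = m$, and collapsing the resulting tensor by the $\cdot$-product (as dictated by the Sweedler-style convention implicit in the statement) yields
\[
\sum_{S \subseteq [n+m],\; |S|=m} (\alpha_S * b) \cdot (\alpha_{[n+m] \setminus S} * f).
\]

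For the left-hand side, I expand $b \cdot f = \sum_\tau b \cdot_\tau f$ over the splits $\tau$ of $[n+m]$ into an $m$-part and an $n$-part, and apply Lemma~\ref{lem:rewrite} to each $a * (b \cdot_\tau f)$. Combined with the projection identity from Lemma~\ref{lem:pi-prop}(b), this presents each term as a shuffle of a $*$-product involving those factors of $a$ indexed by $\tau$'s $m$-part against $b$, with a $*$-product involving the remaining factors of $a$ against $f$. Together with the outer symmetrizations that define $a$, $b$, $f$, the resulting multiple sum can be reindexed by a size-$m$ subset $S \subseteq [n+m]$ --- recording which positions of $a$ end up paired with $b$'s factors --- after which the leftover internal permutations reassemble the symmetrized $*$-products $\alpha_S * b$ and $\alpha_{[n+m]\setminus S} * f$, and the sum over splits produces the outer $\cdot$-product.

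The main obstacle is combinatorial bookkeeping rather than any conceptual difficulty: symmetrizations appear at three separate levels (in $a$, $b$, and $f$), and the binomial factor $\binom{n+m}{m}$ coming from the number of splits of $[n+m]$ must be absorbed cleanly via Lemma~\ref{lem:pi-prop}(b) when converting between the $\cdot_\tau$- and $\cdot$-level descriptions. This is essentially the same matching mechanism that was used to identify \eqref{eqn:Deltabx-2} with \eqref{eqn:Deltabx-3} in the proof of Lemma~\ref{lem:hopf-ring}, so I would model the bookkeeping directly on that argument.
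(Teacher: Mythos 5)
Your plan is essentially the paper's argument: expand $a$ and $b$ as symmetrizations of pure tensors, apply the pure-tensor identity of Lemma~\ref{lem:rewrite} to each $a^\rho * (b^\tau \cdot_\sigma f)$, and re-sum over the permutations and the split to recognize $\Delta(a) * (b\otimes f)$ (with the implicit $\otimes \to \cdot$ collapse). One small correction: the binomial factor $\binom{n+m}{m}$ you anticipate never actually appears and Lemma~\ref{lem:pi-prop}(b) is not needed --- since $a$, $b$, $f$ are already fully symmetric, summing the pure-tensor identity over all splits $\sigma$ directly reassembles the $\cdot$-product rather than an unsymmetrized $\cdot_\sigma$ that would have to be averaged.
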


\begin{proof}
We follow the proof of Lemma~\ref{lem:rewrite}: assume that $a = \sum_{\rho \in \Sigma_{n+m}} \alpha_{\rho(1)} \otimes \cdots \otimes \alpha_{\rho(n+m)}$ and $b = \sum_{\tau \in \Sigma_m} \beta_{\tau(1)} \otimes \cdots \otimes \beta_{\tau(n)}$. Write $a^\rho = \alpha_{\rho(1)} \otimes \cdots \otimes \alpha_{\rho(n+m)}$ and $b^\tau = \beta_{\tau(1)} \otimes \cdots \otimes \beta_{\tau(n)}$. Pick a split $\sigma = \{i_1, \dots, i_m\}, \{j_1, \dots, j_n\}$ of $[n+m]$. Then 
\[
a^\rho * (b^\tau \cdot_\sigma f) = (\alpha_{\rho(i_1)} \otimes \cdots \otimes \alpha_{\rho(i_m)} * \beta_{\tau(1)} \otimes \cdots  \otimes \beta_{\tau(m)}) \cdot_\sigma (\alpha_{\rho(j_1)} \otimes \cdots \otimes \alpha_{\rho(j_n)} * f),
\]
and $a * (b \cdot_\sigma f)$ is the sum of these expressions over all choices of $\rho, \tau, \sigma$. But this is the same thing as $\Delta(a) * (b \otimes f)$.
\end{proof}

\begin{remark}
In the previous lemma, write $\Delta(a) = \sum a^{(1)} \otimes a^{(2)}$. Then 
\[
a * (b \cdot f) = \Delta(a) * (b \otimes f) = \sum (a^{(1)} * b) \cdot (a^{(2)} * f).
\]
So we conclude that $*$ ``distributes'' over $\cdot$ in the sense of coalgebras. More specifically, $\cB^\Sigma$ is a ring object in the monoidal category of coalgebras, where $*$ is multiplication and $\cdot$ is addition. These are called Hopf rings or coalgebraic rings (we have ignored the antipode since we do not use it). This is closely related to the example in \cite[Definition 2.1]{GSS}.
\end{remark}

\subsection{Properties of $\cB_\Sigma$}

$\cB_\Sigma$ has an algebra structure: define $f \cdot g$ to be the image of $\tilde{f} \cdot_\sigma \tilde{g}$ under $\cB \to \cB_\Sigma$ for any split $\sigma$ and lifts $\tilde{f}, \tilde{g} \in \cB$ of $f,g \in \cB_\Sigma$ (this is independent of the choice of lifts and the choice of split). To define a $*$-product on $\cB_\Sigma$, we use the fact that $\fS$ is a linear isomorphism and define $f * g = \fS^{-1}(\fS(f) * \fS(g))$.

\begin{remark}
While $\fS$ behaves poorly in positive characteristic, the $*$-product can still be defined on $\cB_\Sigma$. Consider the universal case $\bk = \bZ$, in which case $\fS$ is injective. One can check that the image is a subring under the $*$-product, so it can be uniquely pulled back to a product on $\cB_\Sigma$. Via base change, we get a product defined for any commutative ring $\bk$.

Unfortunately, many products in $\cB_\Sigma$ are $0$ when $\bk$ is a field of positive characteristic: for instance, when $B = \bk[x]$, then $[x^d]^n$ spans $\Sym^n(B_d)$, and $[x^d]^n * [x^e]^n = n! [x^{d+e}]^n$.
\end{remark}

Also, $\cB_\Sigma$ has a comultiplication $\Delta$ defined as follows: if $w_1 \cdots w_n \in \Sym^n(B_d)$, then 
\[
\Delta(w_1 \cdots w_n) = \sum_{S \subseteq [n]} w_S \otimes w_{[n]\setminus S}
\]
where the sum is over all subsets $S \subseteq [n]$ and $w_S = \prod_{i \in S} w_i \in \Sym^{|S|}(B_d)$. Most importantly, this is defined by $w \mapsto w \otimes 1 + 1 \otimes w$ when $w \in \Sym^1(B_d)$ and extended uniquely by the requirement that $\Delta \colon \cB_\Sigma \to \cB_\Sigma \otimes \cB_\Sigma$ is an algebra homomorphism. 

\begin{proposition}
The symmetrization map $\fS \colon\cB_\Sigma \to \cB^\Sigma$ is an isomorphism of bigraded bialgebras under the $\cdot$ product. More precisely, the following two diagrams commute:
\[
\xymatrix{ \cB_\Sigma \otimes \cB_\Sigma \ar[r]^-\cdot \ar[d]_-{\fS \otimes \fS} & \cB_\Sigma \ar[d]^-\fS \\
\cB^\Sigma \otimes \cB^\Sigma \ar[r]^-\cdot & \cB^\Sigma} \qquad
\xymatrix{ \cB_\Sigma \ar[r]^-\Delta \ar[d]_-{\fS} & \cB_\Sigma \otimes \cB_\Sigma \ar[d]^-{\fS \otimes \fS} \\
\cB^\Sigma \ar[r]^-\Delta & \cB^\Sigma \otimes \cB^\Sigma}
\]
\end{proposition}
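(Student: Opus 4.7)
The plan is to verify the two diagrams separately, with the compatibility with $\cdot$ doing all the combinatorial work and the compatibility with $\Delta$ then following by a universality argument. Linear bijectivity of $\fS$ has already been established, so once both diagrams commute we obtain an isomorphism of bigraded bialgebras under $(\cdot, \Delta)$.

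For the first diagram, I would reduce to checking $\fS(f \cdot g) = \fS(f) \cdot \fS(g)$ on classes of the form $f = w_1 \cdots w_n \in \Sym^n(B_d)$ and $g = v_1 \cdots v_m \in \Sym^m(B_d)$. Lifting via the trivial split, the left-hand side is $\fS(w_1 \cdots w_n v_1 \cdots v_m) = \sum_{\rho \in \Sigma_{n+m}} u_{\rho(1)} \otimes \cdots \otimes u_{\rho(n+m)}$, where $(u_1,\ldots,u_{n+m}) = (w_1,\ldots,w_n,v_1,\ldots,v_m)$. Expanding the right-hand side via $\fS(f) \cdot \fS(g) = \sum_\sigma \fS(f) \cdot_\sigma \fS(g)$ yields
\[
\sum_{\sigma,\,\alpha \in \Sigma_n,\, \beta \in \Sigma_m} (w_{\alpha(1)} \otimes \cdots \otimes w_{\alpha(n)}) \cdot_\sigma (v_{\beta(1)} \otimes \cdots \otimes v_{\beta(m)}).
\]
The combinatorial heart is the bijection between triples $(\sigma, \alpha, \beta)$ and permutations $\rho \in \Sigma_{n+m}$: given $\rho$, the ``$w$-block'' of $\sigma$ is $\{\rho(1), \ldots, \rho(n)\}$, $\alpha$ is the unique permutation for which $\rho \circ \alpha$ lists this set in increasing order, and $\beta$ is defined analogously from $\{\rho(n+1), \ldots, \rho(n+m)\}$. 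Both sides have cardinality $(n+m)!$, and under this bijection the summands match term for term, giving the desired identity.

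For the second diagram, I would invoke the universal property built into each $\Delta$. By construction, both comultiplications are $\cdot$-algebra homomorphisms sending each element $w$ of bidegree $(d,1)$ to $w \otimes 1 + 1 \otimes w$. Since $\fS$ restricts to the identity in bidegree $(d,1)$ (symmetrization over $\Sigma_1$ is trivial) and intertwines $\cdot$ by the first diagram, both $(\fS \otimes \fS) \circ \Delta$ and $\Delta \circ \fS$ are $\cdot$-algebra maps $\cB_\Sigma \to \cB^\Sigma \otimes \cB^\Sigma$ that agree on the generating set $\bigcup_d \cB_{\Sigma, d, 1}$, hence they coincide. Here I use that $\cB_\Sigma$ is $\cdot$-generated by its bidegree-$(d,1)$ pieces, which is immediate from $\Sym^n(B_d)$ being spanned by products $w_1 \cdots w_n$. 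I expect the main technical point to be the careful bookkeeping in the bijection of the first diagram; once the $\cdot$-compatibility is nailed down, the $\Delta$-compatibility is essentially formal.
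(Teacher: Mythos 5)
Your argument follows the paper's proof exactly: the same reduction of $\cdot$-compatibility to a bijection between $\Sigma_{n+m}$ and triples (split, $\Sigma_n$-element, $\Sigma_m$-element), and the same formal ``agree on $\cdot$-algebra generators in $n$-degree $1$'' argument for $\Delta$-compatibility. One small indexing slip: since the term indexed by $\rho$ places $u_{\rho(k)}$ in the $k$th tensor slot, the positions occupied by the $w_i$'s form the set $\rho^{-1}(\{1,\dots,n\})$ rather than $\{\rho(1),\dots,\rho(n)\}$; this does not affect the validity of the bijection or the cardinality count $(n+m)! = \binom{n+m}{n}\,n!\,m!$, but the explicit recipe for $(\sigma,\alpha,\beta)$ should be stated in terms of $\rho^{-1}$.
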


\begin{proof}
To verify that $\fS$ is compatible with $\cdot$, we just need to check on monomials. We have 
\begin{align*}
\fS(w_1 \cdots w_n) \cdot \fS(v_1 \cdots v_m) = (\sum_{\sigma \in \Sigma_n} w_{\sigma(1)} \otimes \cdots \otimes w_{\sigma(n)}) \cdot (\sum_{\tau \in \Sigma_m} v_{\tau(1)} \otimes \cdots \otimes v_{\tau(m)}).
\end{align*}
Symmetrizing the first $n$ terms and symmetrizing the last $m$ terms and then summing over all splits is the same as symmetrizing over all $n+m$ terms, so this sum is the same as $\fS(w_1 \cdots w_n v_1 \cdots v_n)$, which shows that $\fS$ is compatible with $\cdot$.

Both $\cB_\Sigma$ and $\cB^\Sigma$ are bialgebras generated in degree $1$ and $\fS$ is compatible with comultiplication in degree $1$, so the same is true for higher degrees. So $\fS$ is an isomorphism.
\end{proof}

In particular, if $\cI \subseteq \cB_\Sigma$ is an ideal under $\cdot$, then the same is true for $\fS(\cI) \subseteq \cB^\Sigma$. 

\begin{definition}
An ideal $\cI \subseteq \cB_\Sigma$ is a {\bf di-ideal} if $\fS(\cI)$ is an ideal with respect to $*$.
\end{definition}

We will explain some basic constructions for di-ideals shortly.

\section{Joins and secants} \label{sec:joins}

Let $V$ be a vector space and $\Sym(V)$ be its symmetric algebra. Given ideals $I, J \subset \Sym(V)$, their {\bf join} $I \star J$ is the kernel of
\[
\Sym(V) \xrightarrow{\Delta} \Sym(V) \otimes \Sym(V) \to \Sym(V)/I \otimes \Sym(V)/J,
\]
where the first map is the standard comultiplication (which is dual to the addition map on $\Spec(\Sym(V))$). Note that $\star$ is an associative and commutative operation since $\Delta$ is coassociative and cocommutative. Set $I^{\star 1} = I$ and $I^{\star r} = I \star I^{\star (r-1)}$ for $r > 1$.

\begin{proposition} \label{prop:join-reduced}
Assume $\bk$ is an algebraically closed field. If $I$ and $J$ are radical ideals, then $I \star J$ is a radical ideal. If $I$ and $J$ are prime ideals, then $I \star J$ is a prime ideal.
\end{proposition}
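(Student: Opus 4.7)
The plan is to observe that the map whose kernel defines $I \star J$ is in fact a ring homomorphism, so $\Sym(V)/(I \star J)$ embeds as a subring of $\Sym(V)/I \otimes \Sym(V)/J$, and then to invoke the standard behavior of reducedness and integrality under tensor products over an algebraically closed ground field.

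More concretely, the first step is to note that the comultiplication $\Delta \colon \Sym(V) \to \Sym(V) \otimes \Sym(V)$ is an algebra homomorphism (it is the algebra map determined by $v \mapsto v \otimes 1 + 1 \otimes v$ for $v \in V$), and the quotient map $\Sym(V) \otimes \Sym(V) \to \Sym(V)/I \otimes \Sym(V)/J$ is visibly a ring homomorphism. Composing, the map defining $I \star J$ is a ring map, so by the first isomorphism theorem we get an injection of $\bk$-algebras
\[
\Sym(V)/(I \star J) \hookrightarrow \Sym(V)/I \otimes_\bk \Sym(V)/J.
\]

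The second step is to invoke two standard facts about tensor products over an algebraically closed (in particular, perfect) field $\bk$: (i) if $A$ and $B$ are reduced $\bk$-algebras, then $A \otimes_\bk B$ is reduced; (ii) if $A$ and $B$ are integral domains over $\bk$ (in which case $\bk$ is algebraically closed inside each of them, since $\bk = \ol{\bk}$), then $A \otimes_\bk B$ is an integral domain. Assuming $I$ radical (resp.\ prime) makes $\Sym(V)/I$ reduced (resp.\ a domain), and similarly for $J$, so the right-hand side of the embedding above is reduced (resp.\ a domain). Since subrings of reduced rings are reduced and subrings of domains are domains, the left-hand side inherits the corresponding property, i.e., $I \star J$ is radical (resp.\ prime).

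There is no real obstacle here once one spots that $\Delta$ is a ring map; the proof is a one-line application of the first isomorphism theorem together with standard commutative algebra. The only thing to be mildly careful about is the hypothesis that $\bk$ is algebraically closed: it is used precisely to guarantee that tensor products of reduced $\bk$-algebras remain reduced and that tensor products of $\bk$-domains remain domains. Both assertions can be found, for instance, in Bourbaki or the Stacks Project, so I would cite them rather than reprove them.
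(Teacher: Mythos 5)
Your proof is correct and follows essentially the same route as the paper: realize $\Sym(V)/(I\star J)$ as a subring of $\Sym(V)/I \otimes_\bk \Sym(V)/J$ (since $\Delta$ is a ring map), then cite the standard facts that over an algebraically closed field tensor products preserve reducedness and integrality, and that these properties pass to subrings. The paper's cited source (Milne) states the tensor-product fact under a finite-generation hypothesis, while you invoke versions that do not need it — a harmless difference, since the context is finitely generated anyway.
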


\begin{proof}
$\Sym(V) / (I \star J)$ is a subring of $\Sym(V) / I \otimes \Sym(V) / J$. The tensor product of reduced rings, respectively integral domains, is also reduced, respectively an integral domain, if the rings are finitely generated over an algebraically closed field \cite[Proposition 5.17]{milne}. Finally, the property of being reduced or integral is inherited by subrings.
\end{proof}

These definitions make sense for ideals $\cI, \cJ \subseteq \cB_\Sigma$ (continuing the notation for $\cB$ from the last section), so we can define the join $\cI \star \cJ$. To be precise, $(\cI \star \cJ)_{d,n}$ is the kernel of the map
\[
(\cB_\Sigma)_{d,n} \xrightarrow{\Delta} \bigoplus_{i=0}^n (\cB_\Sigma / \cI)_{d,i} \otimes (\cB_\Sigma / \cJ)_{d,n-i}.
\]

Since $\fS$ is compatible with $\Delta$, we deduce that 
\[
\fS(\cI \star \cJ) = \fS(\cI) \star \fS(\cJ).
\]

\begin{proposition} \label{prop:join-biideal}
If $\cI, \cJ \subseteq \cB_\Sigma$ are di-ideals, then $\cI \star \cJ$ is a di-ideal.
\end{proposition}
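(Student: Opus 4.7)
The plan is to verify the two conditions in the definition of a di-ideal: (a) $\cI \star \cJ$ is an ideal of $\cB_\Sigma$ with respect to $\cdot$, and (b) $\fS(\cI \star \cJ)$ is an ideal of $\cB^\Sigma$ with respect to $*$. Condition (a) is essentially formal: since $\Delta \colon \cB_\Sigma \to \cB_\Sigma \otimes \cB_\Sigma$ was constructed as a homomorphism of algebras under $\cdot$, and $\cI, \cJ$ are $\cdot$-ideals by hypothesis, the composition $\cB_\Sigma \xrightarrow{\Delta} \cB_\Sigma \otimes \cB_\Sigma \to (\cB_\Sigma/\cI) \otimes (\cB_\Sigma/\cJ)$ is a $\cdot$-algebra map, so its kernel $\cI \star \cJ$ is automatically a $\cdot$-ideal.

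For (b), I would first invoke the identity $\fS(\cI \star \cJ) = \fS(\cI) \star \fS(\cJ)$ already recorded in the paper (which uses that $\fS$ intertwines the two versions of $\Delta$). What remains is to prove that the join of two $*$-ideals in $\cB^\Sigma$ is again a $*$-ideal. Fix $f \in \fS(\cI) \star \fS(\cJ)$; by definition this means $\Delta(f) \in \fS(\cI) \otimes \cB^\Sigma + \cB^\Sigma \otimes \fS(\cJ)$. For any $g \in \cB^\Sigma$, Lemma~\ref{lem:hopf-ring} yields $\Delta(g * f) = \Delta(g) * \Delta(f)$, where $*$ is applied componentwise on the tensor product. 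Writing $\Delta(g) = \sum g^{(1)} \otimes g^{(2)}$ in Sweedler notation and distributing against the two summands of $\Delta(f)$, each resulting term has the form $(g^{(1)} * a) \otimes (g^{(2)} * b)$ with either $a \in \fS(\cI)$ or $b \in \fS(\cJ)$. Because $\fS(\cI)$ and $\fS(\cJ)$ are $*$-ideals, each such term lies in $\fS(\cI) \otimes \cB^\Sigma + \cB^\Sigma \otimes \fS(\cJ)$, so $\Delta(g * f)$ does as well, whence $g * f \in \fS(\cI) \star \fS(\cJ)$.

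I do not anticipate any serious difficulty, since the machinery was built precisely so that $\Delta$ is simultaneously a $\cdot$-algebra map (by construction on degree-one elements) and multiplicative for $*$ (by Lemma~\ref{lem:hopf-ring}); this Hopf-ring compatibility is exactly what is needed for joins of di-ideals to be di-ideals. The only subtlety is bookkeeping of bigradings: $\Delta$ preserves the internal degree $d$ and splits the external degree $n$ additively, while $*$ adds $d$ and preserves $n$, so all manipulations remain within the correct bigraded pieces. This is routine to check and not a genuine obstacle.
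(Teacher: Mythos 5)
Your proposal is correct and matches the paper's argument in substance: both rest on the identity $\fS(\cI\star\cJ)=\fS(\cI)\star\fS(\cJ)$ and on Lemma~\ref{lem:hopf-ring} ($\Delta(\bx*\bv)=\Delta(\bx)*\Delta(\bv)$), with the paper phrasing the final step via the induced multiplication on the quotient $\cB^\Sigma/\fS(\cI)\otimes\cB^\Sigma/\fS(\cJ)$ rather than your equivalent Sweedler-notation bookkeeping inside the kernel subspace $\fS(\cI)\otimes\cB^\Sigma+\cB^\Sigma\otimes\fS(\cJ)$. The one thing you make explicit that the paper leaves tacit is part (a), the verification that $\cI\star\cJ$ is a $\cdot$-ideal; this is harmless but worth stating, and your reasoning (kernel of a $\cdot$-algebra map is a $\cdot$-ideal) is the right one.
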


\begin{proof}
Pick $\bv \in \fS(\cI \star \cJ)$. By definition, $\bv$ is in the kernel of the map
\[
\Delta \colon \cB^\Sigma \to \cB^\Sigma / \fS(\cI) \otimes \cB^\Sigma / \fS(\cJ).
\]
Since both $\fS(\cI)$ and $\fS(\cJ)$ are ideals under $*$, it gives a well-defined multiplication on $\cB^\Sigma / \fS(\cI) \otimes \cB^\Sigma / \fS(\cJ)$. By Lemma~\ref{lem:hopf-ring}, given $\bx \in \cB^\Sigma$, we have $\Delta(\bx * \bv) = \Delta(\bx) * \Delta(\bv)$. But $\Delta(\bv) = 0$, so $\bx* \bv \in \fS(\cI \star \cJ)$. 
\end{proof}

Given a graded ring $B$ generated by $B_1$, and a positive integer $d$, we have an ideal in $\Sym(B_d)$, which is the kernel of the surjection $\Sym(B_d) \to \bigoplus_{e \ge 0} B_{de}$. Putting these all together, we get a subspace $\cI_B(1) \subset \cB_\Sigma$, that is, $\cI_B(1)_{d,n}$ is the kernel of the map $\Sym^n(B_d) \to B_{dn}$. We define $\cI_B(r) = \cI_B(1)^{\star r}$.

\begin{proposition} \label{prop:secant-biideal}
For any $r$, $\cI_B(r)$ is a di-ideal.
\end{proposition}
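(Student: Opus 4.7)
The plan is to proceed by induction on $r$. The inductive step is immediate from Proposition~\ref{prop:join-biideal}: since $\cI_B(r) = \cI_B(1) \star \cI_B(r-1)$, if both $\cI_B(1)$ and $\cI_B(r-1)$ are di-ideals, so is their join. Thus the entire content is in the base case $r = 1$.

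To establish that $\cI_B(1)$ is a $\cdot$-ideal, observe that the $\cdot$-product on $\cB_\Sigma$ preserves the $d$-grading, since the underlying $\cdot_\sigma$ sends pieces of different $d$'s to zero. So one can work at a fixed $d$, where $\bigoplus_n \Sym^n(B_d) = \Sym(B_d)$ and $\cdot$ is ordinary multiplication in the symmetric algebra. The subspace $\bigoplus_n \cI_B(1)_{d,n}$ is by construction the kernel of the graded ring homomorphism $\Sym(B_d) \to B$, $v_1 \cdots v_n \mapsto v_1 \cdots v_n$, and is therefore automatically an ideal.

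The real work is in showing that $\fS(\cI_B(1))$ is $*$-closed in $\cB^\Sigma$, or equivalently (by transport of structure along $\fS$) that $\cI_B(1)$ is $*$-closed in $\cB_\Sigma$. I would first unwind the definition $f*g = \fS^{-1}(\fS(f)*\fS(g))$ on divided monomials to derive a formula of the shape
\[
(x_1 \cdots x_n) * (v_1 \cdots v_n) = \sum_{\gamma \in \Sigma_n} (x_1 v_{\gamma(1)})(x_2 v_{\gamma(2)}) \cdots (x_n v_{\gamma(n)}) \in \Sym^n(B_{d+e}).
\]
Then, applying the evaluation $\Sym^n(B_{d+e}) \to B_{(d+e)n}$, each individual summand maps by commutativity of $B$ to $(x_1 \cdots x_n)(v_1 \cdots v_n) \in B_{(d+e)n}$. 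Hence the image of the whole $f*g$ vanishes as soon as $v_1 \cdots v_n = 0$ in $B_{dn}$, i.e., whenever $g \in \cI_B(1)_{d,n}$, placing $f*g$ in $\cI_B(1)_{d+e,n}$; bilinearity extends this to arbitrary $f,g$.

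The main obstacle is the symmetrization bookkeeping required to derive the explicit $*$-product formula, tracking how the factors of $n!$ introduced by $\fS$ cancel those from $\fS^{-1}$. Once the formula is in hand, the ideal property is essentially forced by the commutativity of $B$. Note that this is where the characteristic-zero hypothesis enters in a genuine way, through the linear inverse $\fS^{-1}$.
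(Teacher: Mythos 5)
Your proposal is correct and follows essentially the same route as the paper: reduce to $r=1$ via Proposition~\ref{prop:join-biideal}, observe the $\cdot$-ideal property is automatic (kernel of a ring map), and check $*$-closure by showing that the evaluation $\mu\colon\Sym^n(B_{d+e})\to B_{(d+e)n}$ applied to $f*g$ factors through $\mu_d(g)\in B_{dn}$. The only difference is cosmetic: you compute the $*$-product formula explicitly in $\cB_\Sigma$ (tracking the $\fS$, $\fS^{-1}$ cancellations), whereas the paper sidesteps all symmetrization bookkeeping by working at the level of $\cB$ with a commuting square relating $*$ to componentwise multiplication and the maps $\mu\colon B_d^{\otimes n}\to B_{dn}$.
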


\begin{proof}
Using Proposition~\ref{prop:join-biideal}, it suffices to do the case $r=1$. Pick $f \in \cI_B(1)_{d,n}$. Then $\fS(f) \in B_d^{\otimes n}$ maps to $0$ under the multiplication map $\mu \colon B_d^{\otimes n} \to B_{dn}$. Now pick $g \in \cB^\Sigma_{e,n}$; we want to show that $\fS(f) * g \in \fS(\cI_B(1))_{d+e,n}$. This follows from the fact that the following diagram commutes
\[
\xymatrix{
B_d^{\otimes n} \otimes B_e^{\otimes n} \ar[r]^-{*} \ar[d]_-{\mu \otimes \mu} & B_{d+e}^{\otimes n} \ar[d]^-{\mu} \\
B_{dn} \otimes B_{en} \ar[r]^-\mu & B_{(d+e)n}
}
\]
since the composition of both pairs of arrows is the map that multiplies everything together using the product in $B$.
\end{proof}

\section{Proof of the main results} \label{sec:proof}

Let $V$ be a vector space. A subscheme $X \subseteq V$ is {\bf conical} if its defining ideal $I_X$ is homogeneous. There are two possible definitions of the $d$th Veronese embedding of $X$, or more specifically, of its coordinate ring. The first is to projectivize $X$ and take $\bigoplus_{e \ge 0} \rH^0(X; \cO_X(de))$ and the second is $\bigoplus_{e \ge 0} (\Sym(V^*)/I_X)_{de}$. These two definitions agree when the embedding of $X$ is projectively normal, i.e., the multiplication maps $\Sym^n \rH^0(X; \cO_X(1)) \to \rH^0(X; \cO_X(n))$ are surjective. In general, we will take the second definition. 

The {\bf $r$th secant scheme} of $X$ is the subscheme of $V$ defined by the ideal $I_X^{\star r}$. Let $\Sec_{d,r}(X)$ be the $r$th secant scheme of the $d$th Veronese embedding of $X$ with the definition given above. By Proposition~\ref{prop:join-reduced}, if $X$ is geometrically reduced, respectively geometrically integral, then the same is true for $\Sec_{d,r}(X)$. In particular, if $X$ is a variety over an algebraically closed field, then the $r$th secant scheme is also a variety and agrees with the geometric definition in terms of adding points on $X$ which was given in the introduction: given two conical subvarieties $X$ and $Y$, the ideal of the Zariski closure of the set of sums $x + y$ where $x \in X$ and $y \in Y$ is $I_X \star I_Y$ because comultiplication is the map on coordinate rings that corresponds to the addition map $V \times V \to V$.

\begin{theorem} \label{thm:secant-X-main}
Let $X \subseteq V$ be a conical subscheme. There is a function $C_X(r)$, depending on $r$ $($and $X)$, but independent of $d$, such that the ideal of $\Sec_{d,r}(X)$ is generated by polynomials of degree $\le C_X(r)$. 
\end{theorem}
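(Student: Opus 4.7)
The plan is to apply the Hopf-ring machinery directly to $X$ via its coordinate ring $B := \Sym(V^*)/I_X$, which is generated by $B_1$ and has $\dim_\bk B_1 \le \dim_\bk V^* < \infty$ (I implicitly take $V$ finite-dimensional, as is standard for the statement to make sense). In this setup, the defining ideal of the $d$th Veronese embedding of $X$ in $\Sym(B_d)$ is precisely the $(d,n)$-bigraded component $\cI_B(1)_{d,n}$ of the ideal in $\cB_\Sigma$ introduced just before Proposition~\ref{prop:secant-biideal}, and the defining ideal of $\Sec_{d,r}(X)$ is the corresponding component of $\cI_B(r) = \cI_B(1)^{\star r}$. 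Proposition~\ref{prop:secant-biideal} tells me that $\cI_B(r)$ is a di-ideal, so its symmetrization $\fS(\cI_B(r)) \subseteq \cB^\Sigma$ is an ideal with respect to both products $\cdot$ and $*$. Proposition~\ref{prop:invt-noeth} then supplies finitely many generators $F_1,\dots,F_N$ of bidegrees $(d_i, n_i)$, and I define $C_X(r) := \max_i n_i$.

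I would then argue that this $C_X(r)$ is the desired degree bound, uniformly in $d$. Fix $d \ge 0$. Under $\fS$ the shuffle product $\cdot$ on the invariant side corresponds to the ordinary commutative multiplication on $\cB_\Sigma = \bigoplus_{d,n} \Sym^n(B_d)$, so multiplication in the polynomial ring $\Sym(B_d)$ lifts to $\cdot$ in $\cB^\Sigma$. By the generation hypothesis every $\fS(f) \in \fS(\cI_B(r))_{d,n}$ is a finite sum of terms of the form $g \cdot (h * F_i)$ with $g \in \cB^\Sigma_{d,\, n-n_i}$ and $h \in \cB^\Sigma_{d-d_i,\, n_i}$. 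Since $\fS(\cI_B(r))$ is closed under $*$, each $h * F_i$ lies in $\fS(\cI_B(r))_{d,n_i}$, so $q_i := \fS^{-1}(h * F_i) \in \cI_B(r)_{d,n_i}$ is a polynomial of degree $n_i \le C_X(r)$ in $\Sym(B_d)$ that vanishes on $\Sec_{d,r}(X)$. Pulling the identity for $\fS(f)$ back to $\Sym(B_d)$ through $\fS^{-1}$, I obtain $f$ as a $\Sym(B_d)$-linear combination of the $q_i$, which is precisely the statement that the ideal of $\Sec_{d,r}(X)$ is generated by polynomials of degree $\le C_X(r)$.

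The main step of substance has already been carried out in the earlier sections: the di-ideal property of $\cI_B(r)$ (Proposition~\ref{prop:secant-biideal}) and the noetherianity of $\cB^\Sigma$ (Proposition~\ref{prop:invt-noeth}). The remaining work in this proof is essentially bookkeeping that exploits the compatibility of $\fS$ with both products; the two subtleties to verify are that the $*$-action of an arbitrary $h \in \cB^\Sigma$ on a generator $F_i$ preserves membership in $\fS(\cI_B(r))$ at the correct bidegree (ensuring $q_i$ has polynomial degree exactly $n_i$), and that $\fS^{-1}$ transports $\cdot$ faithfully to polynomial multiplication in $\Sym(B_d)$. Both points follow from the definitions and from the proposition that $\fS$ is a bigraded bialgebra isomorphism under $\cdot$.
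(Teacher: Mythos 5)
Your proposal is correct and follows the same route as the paper's own proof: set $B = \Sym(V^*)/I_X$, invoke Propositions~\ref{prop:secant-biideal} and~\ref{prop:invt-noeth} to produce finitely many generators of $\fS(\cI_B(r))$ under $\cdot$ and $*$, observe that $*$ preserves the $n$-grading so $g * f_i$ has the same degree $n_i$ as $f_i$, and take $C_X(r) = \max_i n_i$. The extra bookkeeping you supply (tracking bidegrees of $g$ and $h$, and noting that $\fS$ transports $\cdot$ to ordinary multiplication in $\Sym(B_d)$) is just a more explicit spelling-out of the two sentences the paper spends on this and is entirely accurate.
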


\begin{proof}
Set $B = \Sym(V^*) / I_X$. Then $\cI_B(r)_{d,n}$ is the space of degree $n$ polynomials in the ideal of $\Sec_{d,r}(X)$. By Propositions~\ref{prop:invt-noeth} and ~\ref{prop:secant-biideal}, $\fS(\cI_B(r))$ is generated by finitely many elements $f_1, \dots, f_N$ under $\cdot$ and $*$. Every element of $\fS(\cI_B(r))$ can be written as a linear combination of elements of the form $h \cdot (g * f_i)$ and so for fixed $d$, a set of ideal generators for $\Sec_{d,r}(X)$ can be taken to be the set of all $\fS^{-1}(g * f_i)$ such that $g * f_i \in \cB_{d,n}^\Sigma$ for some $n$. The degree of $g * f_i$ is the same as that of $f_i$ (if $f \in \cB_{d,n}^\Sigma$, then its degree is $n$). So we can take $C_X(r) = \max( \deg(f_1), \dots, \deg(f_N))$.
\end{proof}

\begin{proof}[Proof of Theorems~\ref{thm:secant-main} and \ref{thm:secant-flag}]
The independence from $\dim V$ follows from \cite[Proposition 5.7]{manivel-michalek}. Translating into our notation, the sections of the bundle $\cL(\bd)$ is the Schur functor $\bS_{\lambda(\bd)}(V^*)$ where $\lambda(\bd) = \sum_{i=1}^\ell (d_i^{e_i})$ and $(b^a) = (b, \dots, b)$ (repeated $a$ times). In particular, the number of nonzero parts of $\lambda(\bd)$ is at most $e_\ell$. So from the quoted result, to bound the degrees of the ideal generators, it suffices to consider a vector space of dimension $(r+2) e_\ell$. So if $\ell = 1$ (in particular, for Theorem~\ref{thm:secant-main}), we are done.

Now we explain the case of general $\ell$. We may assume that $V$ is a fixed vector space of dimension $(r+2) e_\ell$. The proof now follows just as in the proof of Theorem~\ref{thm:secant-X}: we replace $B$ by the ring
\[
\bigoplus_{d_1, \dots, d_\ell \ge 0} \rH^0(\bF(\be, V); \cL(\bd)).
\]
This is graded by the sum $d_1 + \cdots + d_\ell$ and is finitely generated in degree $1$ (this is well known, a reference is \cite[Theorem 3.1.2]{brion-kumar}). For every line bundle $\cL(\bd)$, $\cS(B)$ contains the subring $\cS(B(\bd))$ where $B(\bd) = \bigoplus_{d \ge 0} \rH^0(\bF(\be, V); \cL(\bd)^{\otimes d})$, and similarly for $\cS(B)_\Sigma$ and $\cS(B(\bd))_\Sigma$. Furthermore, $\cI_B(1) \cap \cS(B(\bd))_\Sigma = \cI_{B(\bd)}(1)$, and the same holds for the higher secant ideals since $\cS(B(\bd))$ is also closed under the comultiplication.

In particular, $\fS(\cI_B(r))$ is finitely generated by $f_1, \dots, f_N$. An element of $\fS(\cI_{B(\bd)}(r))$ can be written as a linear combination of $h \cdot (g * f_i)$. In particular, for this multiplication to be defined, we need $g * f_i \in \fS(\cS(B(\bd))_\Sigma)$. This implies that a set of ideal generators for the $r$th secant ideal for $\bF(\be,V)$ with respect to the map given by $\cL(\bd)^{\otimes d}$ is the set of $\fS^{-1}(g * f_i)$ such that $g * f_i \in \fS(\cS(B(\bd)_\Sigma))_{d,n}$ where $n = \deg(f_i)$. So as in the proof of Theorem~\ref{thm:secant-X}, we can take $C_\be(r) = \max(\deg(f_1), \dots, \deg(f_N))$.
\end{proof}

\begin{remark}
Alternatively, to get the independence from $\dim V$ in Theorem~\ref{thm:secant-main}, one can use subspace varieties (in fact, they are implicit in the proof of \cite[Proposition 5.7]{manivel-michalek}), which we now explain. If $V$ is a vector space, then the rank $r$ {\bf subspace variety} $\Sub_{d,r}(V)$ of $\rD^d V$ is the collection of vectors that belong to a subspace $\rD^d W$ where $W \subseteq V$ and $\dim W = r$. In fact, $\Sub_{d,r}(V)$ is a closed subvariety of $\rD^d V$ since it can be expressed as the image of a variety under a projective morphism (see \cite[\S 7.1]{weyman}).
Note that $\Sec_{d,r}(V) \subseteq \Sub_{d,r}(V)$: given $\sum_{i=1}^r v_i^{\otimes d} \in \Sec_{d,r}(V)$, it belongs to $\rD^d W$ where $W$ is any $r$-dimensional subspace of $V$ that contains $v_1, \dots, v_r$, and taking Zariski closures is not an issue since $\Sub_{d,r}(V)$ is closed.

It follows from \cite[Proposition 4.2.6]{porras} or \cite[Corollary 7.2.3]{weyman} that the ideal of $\Sub_{d,r}(V)$ has a determinantal description and in fact is generated by polynomials of degree $r+1$. As a representation of $\GL(V)$, the coordinate ring of $\Sub_{d,r}(V)$ is a sum of Schur functors $\bS_\lambda(V^*)$ with $\ell(\lambda) \le r$ \cite[Proposition 7.1.2(b)]{weyman}, and using \cite[Corollary 9.1.3]{expos}, the lattice of $\GL(V')$-equivariant ideals in the coordinate ring of $\Sub_{d,r}(V')$ is isomorphic to the lattice of $\GL(V)$-equivariant ideals in the coordinate ring of $\Sub_{d,r}(V)$ as soon as $\dim V' \ge \dim V \ge r$. So one need only work with $r$-dimensional vector spaces to study $\Sec_{d,r}(V)$.
\end{remark}

\section{Multi-graded generalizations} \label{sec:products}

Fix a positive integer $m$. Given an $m$-tuple of positive integers $\bd = (d_1, \dots, d_m)$ and vector spaces $\bV = (V_1, \dots, V_m)$, the Segre--Veronese cone is the image of the map 
\begin{align*}
V_1 \oplus \cdots \oplus V_m &\to \rD^{d_1} V_1 \otimes \cdots \otimes \rD^{d_m} V_m\\
(v_1, \dots, v_m) &\mapsto v_1^{\otimes d_1} \otimes \cdots \otimes v_m^{\otimes d_m},
\end{align*}
and the $r$th secant variety $\Sec_{\bd,r}(\bV)$ is defined by taking the Zariski closure of the set of $r$-fold sums of vectors in the image.

The techniques used above can be adopted to prove the following generalization:

\begin{theorem} \label{thm:segre-ver}
There is a function $C_m(r)$, depending on $r$, but independent of $\bd$ and $\dim V_i$, such that the ideal of $\Sec_{\bd,r}(\bV)$ is generated by polynomials of degree $\le C_m(r)$.
\end{theorem}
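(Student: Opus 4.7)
The strategy mirrors Theorem~\ref{thm:secant-flag}: we introduce a multi-graded version of the shuffle / Hopf ring machinery of Sections~\ref{sec:shuffle}--\ref{sec:joins}, combining all multi-degrees $\bd \in \bZ^m_{>0}$ into a single finitely-generated structure. As a preliminary, subspace varieties applied slot by slot reduce $\dim V_i$ to $r$ in each factor independently (either by iterating the argument at the end of Section~\ref{sec:proof}, or by iterating \cite[Proposition 5.7]{manivel-michalek}), so we may fix $V_i = \bk^r$ for every $i$. After this reduction, the goal is to bound the degrees of ideal generators of $\Sec_{\bd,r}(\bV)$ uniformly in $\bd$.

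Consider the $\bZ^m_{\ge 0}$-graded polynomial ring $B = \Sym(V_1^* \oplus \cdots \oplus V_m^*)$, with $B_\bd = \Sym^{d_1} V_1^* \otimes \cdots \otimes \Sym^{d_m} V_m^*$, and form the multi-graded shuffle space
\[
\cA = \bigoplus_{\bd \in \bZ^m_{\ge 0},\; n \ge 0} B_\bd^{\otimes n},
\]
equipped with the shuffle products $\cdot_\sigma$ on the outer tensor factor and the slotwise $*$-product $B_\bd^{\otimes n} \otimes B_\be^{\otimes n} \to B_{\bd+\be}^{\otimes n}$ multiplying inner factors position by position. A monomial now corresponds to a word whose letters lie in $\prod_{i=1}^m \bZ_{\ge 0}^r$, which is still a noetherian poset under componentwise order, so Higman's lemma \cite[Theorem 1.3]{draisma-notes} again gives the noetherian property for the poset of words, and the initial-ideal arguments of Section~\ref{sec:shuffle} carry over verbatim. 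This yields multi-graded analogues of Corollaries~\ref{cor:monomial-fg} and \ref{cor:sa-noeth}: every ideal of $\cA$ (under $\cdot_\sigma$ and $*$) is finitely generated.

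The symmetrizations $\cA^\Sigma$ and $\cA_\Sigma$, the linear isomorphism $\fS$, the comultiplication $\Delta$ (defined for each fixed inner multi-degree $\bd$ by declaring it to be an algebra homomorphism under $\cdot$ with $w \mapsto w \otimes 1 + 1 \otimes w$ for $w \in \cA^\Sigma_{\bd,1}$), and the Hopf ring identity $\Delta(\bx * \bv) = \Delta(\bx) * \Delta(\bv)$ all transfer verbatim from Section~\ref{sec:coalgebraic}: the arguments use only that $B$ is a graded commutative ring generated in (now multi-)degree one, and touch the inner grading only through its additive structure. Hence Propositions~\ref{prop:invt-noeth}, \ref{prop:join-biideal}, and \ref{prop:secant-biideal} hold in this setting, and the di-ideal $\cI_B(r) \subseteq \cA_\Sigma$ whose $(\bd,n)$-graded piece is the degree $n$ part of the ideal of $\Sec_{\bd,r}(\bV)$ is finitely generated.

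Finite generation of $\fS(\cI_B(r))$ under $\cdot$ and $*$ then produces polynomials $f_1, \dots, f_N$ such that, just as in Theorem~\ref{thm:secant-X-main}, every element of $\fS(\cI_B(r))$ of bi-multi-degree $(\bd,n)$ is a sum of terms $h \cdot (g * f_i)$ with $g * f_i \in \cA^\Sigma_{\bd, \deg(f_i)}$; so $C_m(r) := \max_i \deg(f_i)$ works, independently of $\bd$ and (by the subspace variety reduction) of $\dim V_i$. The main obstacle is purely organisational, namely verifying that replacing a single inner degree $d \in \bZ_{\ge 0}$ by an inner multidegree $\bd \in \bZ^m_{\ge 0}$ preserves (a) noetherianity of the poset $\bigl(\prod_i \bZ_{\ge 0}^r\bigr)^\star$ of words, and (b) the Hopf ring identity of Lemma~\ref{lem:hopf-ring}; both are immediate from inspection of the arguments already given, so no essentially new idea is required.
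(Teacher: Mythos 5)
Your proposal is correct and takes essentially the same route as the paper. The paper's proof of Theorem~\ref{thm:segre-ver} is terse: it invokes a multi-graded version of \cite[Proposition 5.7]{manivel-michalek} for the dimension reduction and then identifies the multi-graded shuffle space with $\cS(B)$ for $B = \bk[x_{i,j} \mid 1 \le i \le m,\ 1 \le j \le r]$, deferring the remaining verifications to ``the rest follows from what we've already proven.'' Your writeup fills in precisely what that deferral entails — that monomials are words over the noetherian poset $\bZ_{\ge 0}^{mr}$, that Higman's lemma still applies, and that the Hopf ring identities of Section~\ref{sec:coalgebraic} only touch the inner degree through its additive monoid structure — so it is a more explicit rendering of the same argument rather than a different one.
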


\begin{proof}
We have avoided proving this from the beginning to reduce notational complexity. We just comment on some of the changes.

To get the independence from $\dim V_i$, we can use a multi-graded version of \cite[Proposition 5.7]{manivel-michalek}. Though the result is not stated in the multi-graded case, the proof generalizes without change. The space $\cA$ is replaced by 
\[
\bigoplus_{n, d_1, \dots, d_m} (\Sym^{d_1} \bk^r \otimes \cdots \otimes \Sym^{d_m} \bk^r)^{\otimes n}.
\]
This is $\cS(B)$ where $B = \bk[x_{i,j} \mid 1 \le i \le m,\ 1 \le j \le r]$, so the rest follows from what we've already proven.
\end{proof}

\begin{remark}
We suspect that the functions $C_m(r)$ can be chosen to be independent of $m$, equivalently, $\max_m \{C_m(r)\} < \infty$ for all $r$. For example, it is well-known that $C_m(1) = 2$ for all $m$, and the main result of \cite{raicu} shows that $C_m(2) = 3$ for all $m$. Unfortunately, this seems to be out of reach with our techniques.
\end{remark}

Our techniques also allow us to get a generalization of Theorem~\ref{thm:secant-X}. Given conical subvarieties $X_i \subseteq V_i$, the secant variety $\Sec_{\bd,r}(\bX)$ is the Zariski closure of the set of expressions $\sum_{i=1}^r x_{1,i}^{\otimes d_1} \otimes \cdots \otimes x_{m,i}^{\otimes d_m}$ in $\rD^{d_1}(V_1) \otimes \cdots \otimes \rD^{d_m}(V_m)$, where $x_{j,i} \in X_j$. To get a scheme-theoretic definition, we use the join of ideals as in \S\ref{sec:proof}. Our generalization is as follows:

\begin{theorem} \label{thm:product-schemes}
Let $X_i \subseteq V_i$ be conical subschemes for $i=1,\dots,m$. There is a function $C_{\bX}(r)$, depending on $r$ $($and the $X_i)$, but independent of $\bd$, so that the ideal of $\Sec_{\bd,r}(\bX)$ is generated by polynomials of degree $\le C_\bX(r)$.
\end{theorem}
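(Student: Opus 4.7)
The plan is to adapt the argument of Theorem~\ref{thm:secant-X-main} by working with the multi-graded coordinate ring
\[
B = \bigotimes_{i=1}^m \bigl(\Sym(V_i^*)/I_{X_i}\bigr),
\]
viewed as a $\bZ_{\ge 0}^m$-graded $\bk$-algebra with $B_\bd = \bigotimes_i (\Sym(V_i^*)/I_{X_i})_{d_i}$. This is the multi-homogeneous coordinate ring of $X_1 \times \cdots \times X_m$, and $\Sec_{\bd,r}(\bX)$ will be cut out in $\Sym^n(B_\bd)$ by the multi-graded analog of $\cI_B(r)$ from Section~\ref{sec:joins}.

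First I would redefine $\cS(B) = \bigoplus_{n, \bd} B_\bd^{\otimes n}$ in the multi-graded setting, with shuffle products $\cdot_\sigma$ internal to each multi-degree $\bd$ and with a $*$-product that adds multi-degrees componentwise. The constructions of $\cB^\Sigma$, $\cB_\Sigma$, $\pi$, and $\fS$ then generalize verbatim. The crucial noetherianity step of Corollary~\ref{cor:sa-noeth} also carries over: a monomial of $\cS(B)$ is encoded by a word whose letters now lie in $\prod_{i=1}^m \bZ_{\ge 0}^{\dim V_i}$, which is still a noetherian poset by Dickson's lemma, so Higman's lemma applies and the corresponding word poset is noetherian. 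Consequently, the multi-graded analogs of Proposition~\ref{prop:invt-noeth} and Corollary~\ref{cor:B-fg-ideal} hold.

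Next, the Hopf-ring identities of Lemma~\ref{lem:hopf-ring} and the join constructions of Propositions~\ref{prop:join-biideal} and~\ref{prop:secant-biideal} respect the multi-grading, since $\Delta$, $\cdot$, and $*$ each preserve or combine multi-degrees in the obvious way. With these in place, finitely many $\cdot$-and-$*$ generators $f_1, \dots, f_N$ of the multi-graded di-ideal $\fS(\cI_B(r))$ exist, and as in the proof of Theorem~\ref{thm:secant-X-main} they produce, for each fixed $\bd$, a set of ideal generators for $\Sec_{\bd,r}(\bX)$ of the form $\fS^{-1}(g * f_i)$ whose degrees equal $\deg(f_i)$. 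Taking $C_\bX(r) = \max_i \deg(f_i)$ gives a bound uniform in $\bd$.

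The main obstacle is essentially bookkeeping: I must check that every lemma in Sections~\ref{sec:shuffle}--\ref{sec:joins} respects the refinement of the grading from a single $d$ to a multi-degree $\bd$. The substantive content (noetherianity of the multi-graded word poset, compatibility of $\pi$ with both products, and the distributive identity $\Delta(\bx * \bv) = \Delta(\bx) * \Delta(\bv)$) is unaffected, because the multi-grading is preserved by every operation and the argument works multi-degree by multi-degree. The hypothesis $\dim V_i < \infty$ is implicit in the statement since $C_\bX(r)$ is allowed to depend on $\bX$, and it is essential for the noetherianity step exactly as in Theorem~\ref{thm:secant-X-main}.
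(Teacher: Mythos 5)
Your proposal follows the same route the paper takes: generalize the shuffle algebra $\cS(B)$, the symmetrized version $\cB^\Sigma$, and the Hopf-ring identities to a $\bZ_{\ge 0}^m$-grading, check noetherianity via Dickson's lemma plus Higman's lemma in the multi-graded word poset (applied to the polynomial case and then transferred to quotients as in Corollary~\ref{cor:B-fg-ideal}), and then run the argument of Theorem~\ref{thm:secant-X-main}. The paper's own proof is terser---it simply says to use multi-graded versions of $\cS(B)$ and deduce finite generation from the polynomial case handled in Theorem~\ref{thm:segre-ver}---but the substance is identical to what you have written, and your spelled-out version is correct.
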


\begin{proof}
The main step is to use multi-graded versions of the shuffle algebras $\cS(B)$ introduced in \S\ref{ss:shuffle-any} and to deduce finite generation of its ideals from the polynomial case which was discussed in the previous proof. Everything else in \S\ref{sec:coalgebraic} follows in the same way.
\end{proof}

Finally, we have a multi-graded generalization of Theorem~\ref{thm:secant-flag}. Given flag varieties $\bF(\be(1), V_1), \dots, \bF(\be(m), V_m)$, and non-negative integer vectors $\bd(1), \dots, \bd(m)$ of the appropriate length, let $\Sec_{\bd(\bullet); r}(\bF(\be(\bullet), V_\bullet))$ denote the $r$th secant variety of the image of $\bF(\be(1), V_1) \times \cdots \times \bF(\be(m), V_m)$ under the map given by the line bundle $\cL(\bd(1)) \boxtimes \cdots \boxtimes \cL(\bd(m))$.

\begin{theorem} \label{thm:secant-flag-product}
Let $(\be(1), \dots, \be(m))$ be an $m$-tuple of increasing sequences of positive integers. There is a function $C_{\be(\bullet)}(r)$ depending on $r$ $($and $\be(1), \dots, \be(m))$, but independent of $\bd(1), \dots, \bd(m)$ and $\dim V_i$, such that the ideal of $\Sec_{\bd(\bullet); r}(\bF(\be(\bullet), V_\bullet))$ is generated by polynomials of degree $\le C_{\be(\bullet)}(r)$.
\end{theorem}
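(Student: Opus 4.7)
The plan is to combine the strategy used for Theorem~\ref{thm:secant-flag} with the multi-graded shuffle formalism introduced in the proofs of Theorems~\ref{thm:segre-ver} and~\ref{thm:product-schemes}. First, we reduce to a fixed $\dim V_j$ by invoking the multi-graded analogue of \cite[Proposition 5.7]{manivel-michalek}: since $\rH^0(\bF(\be(j), V_j); \cL(\bd(j)))$ is the Schur functor $\bS_{\lambda(\bd(j))}(V_j^*)$ whose indexing partition has at most $e_{\ell(j)}$ nonzero parts (exactly as computed in the proof of Theorem~\ref{thm:secant-flag}), it suffices to fix $V_j$ of dimension $(r+2)e_{\ell(j)}$ for each $j=1,\dots,m$.

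With this reduction in hand, set $X = \bF(\be(1), V_1) \times \cdots \times \bF(\be(m), V_m)$ and consider the $\bZ_{\ge 0}^m$-graded ring
\[
B = \bigoplus_{\bd(1), \dots, \bd(m)} \rH^0\bigl(X; \cL(\bd(1)) \boxtimes \cdots \boxtimes \cL(\bd(m))\bigr).
\]
By the Künneth formula, $B$ is the tensor product of the individual homogeneous coordinate rings, each of which is generated in degree $1$ by \cite[Theorem 3.1.2]{brion-kumar}; consequently $B$ is generated as a $\bk$-algebra by its pieces $B_{\be_j}$ of degree $\be_j$ (the $j$th standard basis vector), and each $B_{\be_j}$ is finite-dimensional. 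Form the multi-graded shuffle algebra $\cS(B)$ with outer index $n$ and inner multi-index $(\bd(1),\dots,\bd(m))$, exactly as in the proof of Theorem~\ref{thm:segre-ver}: tensor factors now live in a common $B_{\bd(1),\dots,\bd(m)}$, the outer shuffle product $\cdot_\sigma$ is unchanged, and we have $m$ internal products $*_1,\dots,*_m$, where $*_j$ adds multi-degrees in the $j$th coordinate.

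Now one executes the analogues of \S\ref{sec:shuffle}--\S\ref{sec:joins}. Finite generation of ideals of $\cS(B)$ (closed under $\cdot_\sigma$ and all $*_j$) reduces, via a surjection from the polynomial shuffle algebra on finitely many variables $\bk[x_{j,i,k}]$, to Higman's lemma applied to the noetherian poset $(\bZ_{\ge 0}^{N})^\star$ for the appropriate $N$, exactly as in Corollary~\ref{cor:B-fg-ideal}. Define $\cB^\Sigma$ and $\cB_\Sigma$ by taking invariants and coinvariants of the outer $\Sigma_n$-action; the symmetrization map $\fS$ is still a linear isomorphism. The Hopf-ring compatibility Lemma~\ref{lem:hopf-ring} holds separately for each $*_j$ in place of $*$, since the comultiplication $\Delta$ only splits the outer index $n$ and is blind to the inner coordinates. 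A di-ideal in $\cB_\Sigma$ is now one whose $\fS$-image is an ideal under all $m$ products $*_1,\dots,*_m$ simultaneously, and Proposition~\ref{prop:join-biideal} carries over verbatim. The ideal $\cI_B(1) \subset \cB_\Sigma$, defined in each multi-degree as the kernel of the multiplication map $\Sym^n(B_{\bd(1),\dots,\bd(m)}) \to B_{n\bd(1),\dots,n\bd(m)}$, is a di-ideal by applying the commuting-square argument of Proposition~\ref{prop:secant-biideal} in each internal direction separately. Setting $\cI_B(r) = \cI_B(1)^{\star r}$, finite generation of $\fS(\cI_B(r))$ by some $f_1, \dots, f_N$ together with the extraction argument of Theorem~\ref{thm:secant-X-main} gives $C_{\be(\bullet)}(r) = \max_i \deg(f_i)$, with the bound independent of all $\bd(1),\dots,\bd(m)$.

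The main obstacle is keeping the $m$-fold internal bookkeeping honest throughout Lemma~\ref{lem:hopf-ring} and Proposition~\ref{prop:secant-biideal}; however, since each $*_j$ interacts with $\cdot$ and $\Delta$ in precisely the way the single $*$ does in \S\ref{sec:coalgebraic}, and since distinct $*_j$ and $*_k$ commute trivially by the bilinear definition, no new identities are needed beyond what is already established. The only genuinely substantive input beyond the single-flag case is the multi-graded variant of the Manivel--Micha{\l}ek dimension reduction, which (as noted in the proof of Theorem~\ref{thm:segre-ver}) carries through without change.
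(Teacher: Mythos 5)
Your proposal is correct and follows the same route as the paper's proof, which is essentially a two-sentence pointer to Theorem~\ref{thm:product-schemes} (extended to the multi-projective setting via the remark following Theorem~\ref{thm:secant-flag}) plus the multi-graded Manivel--Micha{\l}ek reduction. You have simply unpacked it.

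One bookkeeping inconsistency is worth flagging. You say $\cS(B)$ carries ``$m$ internal products $*_1,\dots,*_m$'' and that $B$ is generated by pieces $B_{\be_j}$ for $j=1,\dots,m$, which would correspond to collapsing the inner grading to $\bZ_{\ge 0}^m$. But the natural inner grading is by $\bZ_{\ge 0}^L$ with $L=\ell(1)+\cdots+\ell(m)$, since the Picard group of each factor $\bF(\be(j),V_j)$ is $\bZ^{\ell(j)}$, not $\bZ$; and when you define $\cI_B(1)$ you do track the full tuple $(\bd(1),\dots,\bd(m))\in\bZ_{\ge 0}^L$. In the $\bZ_{\ge 0}^L$ picture there is just one $*$-product adding multi-degrees (as with $\cA$), and the $(\bd(\bullet),n)$-piece of $\cI_B(r)$ \emph{is} the degree-$n$ part of the secant ideal of the $\cL(\bd(1))\boxtimes\cdots\boxtimes\cL(\bd(m))$-embedding, so the extraction argument of Theorem~\ref{thm:secant-X-main} applies directly. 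If instead you collapse to $\bZ_{\ge 0}^m$ (or to $\bZ_{\ge 0}$, as the paper does in Theorem~\ref{thm:secant-flag}), then many distinct $\bd(\bullet)$ share the same collapsed degree, and you must add the $\cS(B(\bd))$-restriction step used in the proof of Theorem~\ref{thm:secant-flag}; the appeal to Theorem~\ref{thm:secant-X-main}'s extraction alone is not quite enough. Either route works, but you should choose one and make the degree bookkeeping consistent throughout.
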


\begin{proof}
This follows from Theorem~\ref{thm:product-schemes} if we do not require independence from $\dim V_i$. To get the independence, we use  \cite[Proposition 5.7]{manivel-michalek} (as we commented before, it is not stated in the multi-graded case, but the same proof works).
\end{proof}


\begin{thebibliography}{GSS2}

\bibitem[Ba]{backelin} J\"orgen Backelin, On the rates of growth of the homologies of Veronese subrings, {\it Algebra, algebraic topology and their interactions (Stockholm, 1983)}, 79--100, Lecture Notes in Math. {\bf 1183}, Springer, Berlin, 1986. 

\bibitem[BK]{brion-kumar} Michel Brion, Shrawan Kumar, {\it Frobenius splitting methods in geometry and representation theory}, Progress in Mathematics {\bf 231}, Birkh\"auser Boston, Inc., Boston, MA, 2005.

\bibitem[BB]{BB} Weronika Buczy\'nska, Jaros{\l}aw Buczy\'nski, Secant varieties to high degree Veronese reembeddings, catalecticant matrices and smoothable Gorenstein schemes, {\it J. Algebraic Geom.} {\bf 23} (2014), no.~1, 63--90, \arxiv{1012.3563v4}.

\bibitem[BGL]{BGL} Jaros{\l}aw Buczy\'nski, Adam Ginensky, J.~M. Landsberg, Determinantal equations for secant varieties and the Eisenbud--Koh--Stillman conjecture, {\it J. Lond. Math. Soc. (2)} {\bf 88} (2013), no.~1, 1--24, \arxiv{1007.0192v4}.

\bibitem[CGG]{CGG} M.~V. Catalisano, A.~V. Geramita, A.~Gimigliano, On the ideals of secant varieties to certain rational varieties, {\it J. Algebra} {\bf 319} (2008), no.~5, 1913--1931, \arxiv{math/0609054v2}.

\bibitem[CEF]{fimodules} Thomas Church, Jordan Ellenberg, Benson Farb, FI-modules and stability for representations of symmetric groups, {\it Duke Math. J.} {\bf 164} (2015), no.~9, 1833--1910, \arxiv{1204.4533v3}.

\bibitem[Co]{cohen} D.~E. Cohen, On the laws of a metabelian variety, {\it J. Algebra} {\bf 5} (1967), 267--273.

\bibitem[DoK]{DK} Vladimir Dotsenko, Anton Khoroshkin, Shuffle algebras, homology, and consecutive pattern avoidance, {\it Algebra Number Theory} {\bf 7} (2013), no.~3, 673--700, \arxiv{1109.2690v2}.

\bibitem[D]{draisma-notes} Jan Draisma, Noetherianity up to symmetry, {\it Combinatorial algebraic geometry}, Lecture Notes in Math. {\bf 2108}, Springer, 2014, \arxiv{1310.1705v2}.

\bibitem[DE]{draismaeggermont} Jan Draisma, Rob H. Eggermont, Pl\"ucker varieties and higher secants of Sato's Grassmannian, {\it J. Reine Angew. Math.}, to appear, \arxiv{1402.1667v3}.

\bibitem[DK]{draismakuttler} Jan Draisma, Jochen Kuttler, Bounded-rank tensors are defined in bounded degree, {\it Duke Math. J.} {\bf 163} (2014), no.~1, 35--63, \arxiv{1103.5336v2}.

\bibitem[ERT]{ERT} David Eisenbud, Alyson Reeves, Burt Totaro, Initial ideals, Veronese subrings, and rates of algebras, {\it Adv. Math.} {\bf 109} (1994), no.~2, 168--187. 

\bibitem[Ga]{galazka} Maciej Ga{\l}{\c a}zka, work in progress.

\bibitem[GSS1]{GSS1} Luis David Garcia, Michael Stillman, Bernd Sturmfels, Algebraic geometry of Bayesian networks, {\it J. Symbolic Comput.} {\bf 39} (2005), no.~3-4, 331--355, \arxiv{math/0301255v1}.

\bibitem[GSS2]{GSS} Chad Giusti, Paolo Salvatore, Dev Sinha, Mod-two cohomology of symmetric groups as a Hopf ring, {\it J. Topol.} {\bf 5} (2012), no.~1, 169--198, \arxiv{0909.3292v3}.

\bibitem[Hi]{higman} Graham Higman, Ordering by divisibility in abstract algebras, {\it Proc. London Math. Soc. (3)} {\bf 2} (1952), 326--336. 

\bibitem[HS]{hillarsullivant} Christopher J. Hillar, Seth Sullivant, Finite Gr\"obner bases in infinite dimensional polynomial rings and applications, {\it Adv. Math.} {\bf 221} (2012), 1--25, \arxiv{0908.1777v2}.

\bibitem[IK]{iarrobino} Anthony Iarrobino, Vassil Kanev, {\it Power sums, Gorenstein algebras, and determinantal loci}, Lecture Notes in Math. {\bf 1721}, Springer-Verlag, Berlin, 1999.

\bibitem[K]{kanev} Vassil Kanev, Chordal varieties of Veronese varieties and catalecticant matrices, {\it J. Math. Sci. (New York)} {\bf 94} (1999), no.~1, 1114--1125. 

\bibitem[L1]{landsberg-br} J.~M. Landsberg, The border rank of the multiplication of $2 \times 2$ matrices is seven, {\it J. Amer. Math. Soc.} {\bf 19} (2006), no.~2, 447--459, \arxiv{math/0407224v3}.

\bibitem[L2]{landsberg-matrix} J.~M. Landsberg, Geometry and the complexity of matrix multiplication, {\it Bull. Amer. Math. Soc. (N.S.)} {\bf 45} (2008), no.~2, 247--284.

\bibitem[L3]{landsberg} J.~M. Landsberg, {\it Tensors: geometry and applications}, Graduate Studies in Mathematics {\bf 128}, American Mathematical Society, Providence, RI, 2012.

\bibitem[LM]{landsberg-manivel} J.~M. Landsberg, L.~Manivel, On the ideals of secant varieties of Segre varieties, {\it Found. Comput. Math.} {\bf 4} (2004), no.~4, 397--422, \arxiv{math/0311388v1}.

\bibitem[LO]{landsberg-ottaviani} J.~M. Landsberg, Giorgio Ottaviani, Equations for secant varieties of Veronese and other varieties, {\it Ann. Mat. Pura Appl. (4)} {\bf 192} (2013), no.~4, 569--606, \arxiv{1111.4567v1}.

\bibitem[LW]{LW1} J.~M. Landsberg, Jerzy Weyman, On the ideals and singularities of secant varieties of Segre varieties, {\it Bull. Lond. Math. Soc.} {\bf 39} (2007), no.~4, 685--697, \arxiv{math/0601452v2}.

\bibitem[Ma]{macdonald} I.~G. Macdonald, {\it Symmetric Functions and Hall Polynomials}, second edition, Oxford Mathematical Monographs, Oxford University Press, New York, 1995.

\bibitem[MM]{manivel-michalek} Laurent Manivel, Mateusz Micha{\l}ek, Secants of minuscule and cominuscule minimal orbits, {\it Linear Algebra Appl.} {\bf 481} (2015), 288--312, \arxiv{1401.1956v1}.

\bibitem[Mi]{milne} J.~S. Milne, {\it Algebraic Geometry}, v6.01, August 23, 2015, available at \url{www.jmilne.org/math/}.

\bibitem[NSS]{deg2tca} Rohit Nagpal, Steven~V Sam, Andrew Snowden, Noetherianity of some degree two twisted commutative algebras, {\it Selecta Math. (N.S.)} {\bf 22} (2016), no.~2, 913--937, \arxiv{1501.06925v2}.

\bibitem[OR]{oeding-raicu} Luke Oeding, Claudiu Raicu, Tangential varieties of Segre--Veronese varieties, {\it Collect. Math.} {\bf 65} (2014), no.~3, 303--330, \arxiv{1111.6202v2}.

\bibitem[P]{porras} Olga Porras, Rank varieties and their resolutions, {\it J. Algebra} {\bf 186} (1996), 677--723.

\bibitem[R]{raicu} Claudiu Raicu, Secant varieties of Segre--Veronese varieties, {\it Algebra Number Theory} {\bf 6} (2012), no.~8, 1817--1868, \arxiv{1011.5867v2}.

\bibitem[SS1]{symc1} Steven V Sam, Andrew Snowden,  GL-equivariant modules over polynomial rings in infinitely many variables, {\it Trans. Amer. Math. Soc.} {\bf 368} (2016), 1097--1158, \arxiv{1206.2233v3}.

\bibitem[SS2]{expos} Steven V Sam, Andrew Snowden, Introduction to twisted commutative algebras, \arxiv{1209.5122v1}.

\bibitem[SS3]{catgb} Steven V Sam, Andrew Snowden, Gr\"obner methods for representations of combinatorial categories, {\it J. Amer. Math. Soc.}, to appear, \arxiv{1409.1670v3}.

\bibitem[SV]{sidman} Jessica Sidman, Peter Vermeire, Equations defining secant varieties: geometry and computation, {\it Combinatorial aspects of commutative algebra and algebraic geometry}, 155--174, Abel Symp. {\bf 6}, Springer, Berlin, 2011. 

\bibitem[Sn]{delta-mod} Andrew Snowden, Syzygies of Segre embeddings and $\Delta$-modules, {\it Duke Math.\ J.} {\bf 162} (2013), no.~2, 225--277, \arxiv{1006.5248v4}.

\bibitem[U]{ullery} Brooke Ullery, On the normality of secant varieties, {\it Adv. Math.} {\bf 288} (2016), 631--647, \arxiv{1408.0865v2}.

\bibitem[We]{weyman} Jerzy Weyman, {\it Cohomology of Vector Bundles and Syzygies}, Cambridge Tracts in Mathematics {\bf 149}, Cambridge University Press, Cambridge, 2003.

\bibitem[Wi]{wilson} W. Stephen Wilson, Hopf rings in algebraic topology, {\it Expo. Math.}, {\bf 18} (2000), no.~5, 369--388.

\end{thebibliography}
\end{document}